\newtheorem{theorem}{Theorem}[section]
\newtheorem{corollary}[theorem]{Corollary}
\newtheorem{lemma}[theorem]{Lemma}
\newtheorem{proposition}[theorem]{Proposition}
\theoremstyle{definition}
\theoremstyle{remark}
\title{Diagonalization of Fix-Mahonian Matrices}
\author{Hery Randriamaro
\thanks{This research was funded by my mother \\
Lot II B 32 bis Faravohitra, 101 Antananarivo, Madagascar \\
e-mail: \texttt{hery.randriamaro@outlook.com}}}
\begin{document}

\maketitle

\begin{abstract}
\noindent Consider the regular representation of the sum over all permutations weighted by the sum of their descent, inversion, and fixed point multinomials. We compute the spectrum and the multiplicities of its elements of that matrix. Note that those multinomial statistics allow to apply the result on several permutation statistics like the number of fixed points, of descents, of inversions, and the major index at the same time.
	
\bigskip 
	
\noindent \textsl{Keywords}: Permutation Statistics, Regular Representation, Matrix Eigenvalues 
	
\smallskip
	
\noindent \textsl{MSC Number}: 05A05, 05E10, 15A18
\end{abstract}

\section{Introduction}

\noindent Let $\mathfrak{S}_n$ be the symmetric group of $[n]$. Recall that the sets of descents, inversions, and fixed points of a permutation $\sigma \in \mathfrak{S}_n$ are respectively
\begin{itemize}
	\item $\mathrm{Des}\,\sigma := \big\{i \in [n-1]\ \big|\ \sigma(i)>\sigma(i+1)\big\}$,
	\item $\mathrm{Inv}\,\sigma := \big\{(i,j) \in [n]^2\ \big|\ i<j,\, \sigma(i)>\sigma(i+1)\big\}$,
	\item $\mathrm{Fix}\,\sigma := \big\{i \in [n]\ \big|\ \sigma(i)=i\big\}$.
\end{itemize}
We mainly study the statistics $\mathrm{des}_x: \mathfrak{S}_n \rightarrow \mathbb{R}[x_1, \dots, x_{n-1}]$, $\mathrm{inv}_y: \mathfrak{S}_n \rightarrow \mathbb{R}[y_{1,2}, \dots, y_{n,n-1}]$, and $\mathrm{fix}_z: \mathfrak{S}_n \rightarrow \mathbb{R}[z]$ respectively defined by
$$\mathrm{des}_x\,\sigma := \sum_{i \in \mathrm{Des}\,\sigma} x_i,\quad \mathrm{inv}_y\,\sigma := \sum_{(i,j) \in \mathrm{Inv}\,\sigma} y_{i,j},\quad \text{and} \quad \mathrm{fix}_z\,\sigma := \#\mathrm{Fix}(\sigma)\,z.$$
Those are obviously multinomial generalizations of the three classical combinatorial parameters, the number of descents, inversions, and fixed points. That latter is probably the oldest among those permutation statistics as it may originate from the problem of coincidences in the game of thirteen of De Montmort in 1708 \cite[p~185]{Mo}. It is often encountered in probability \cite{DiFuGu}. One also notes an appearance of the number of inversions already in 1888 when Laisant used it to classify the permutations \cite{La}. The number of descents is naturally associated to the descent algebra of $\mathfrak{S}_n$ \cite{BiPf}. But it also plays a key role in the combinatorial analysis of genome rearrangement \cite{BoPe}. In combinatorics, one usually meets those three statistics in distribution problems \cite{FoHa1}, \cite{FoHa2}, \cite{GaGe}. In this article, we study them in matrix context. Define the matrix 
$$\mathsf{IF}(n) := \big(\mathrm{inv}_y(\sigma \tau^{-1}) + \mathrm{fix}_z(\sigma \tau^{-1})\big)_{\sigma, \tau \in \mathfrak{S}_n}.$$ Let $\mathrm{Sp}(\mathsf{M})$ be the spectrum of a square matrix $\mathsf{M}$, and $\mathrm{m}_{\mathsf{M}}(x)$ the multiplicity of the eigenvalue $x \in \mathrm{Sp}(\mathsf{M})$. \emph{We aim to determine $\mathrm{Sp}\big(\mathsf{IF}(n)\big)$, and the multiplicity of each element in $\mathrm{Sp}\big(\mathsf{IF}(n)\big)$}. Computing with SageMath, we obtain 
\begin{itemize}
\item[$(n=1)$] $\mathrm{Sp}\big(\mathsf{IF}(1)\big) = \{z\}$ and $\mathrm{m}_{\mathsf{IF}(1)}(z) = 1$,
\item[$(n=2)$] $\mathrm{Sp}\big(\mathsf{IF}(2)\big) = \{y_{1,2} + 2z,\, -y_{1,2} + 2z\}$ and $\mathrm{m}_{\mathsf{IF}(2)}(y_{1,2} + 2z) = 1$, $\mathrm{m}_{\mathsf{IF}(2)}(-y_{1,2} + 2z) = 1$,
\item[$(n=3)$] $\mathrm{Sp}\big(\mathsf{IF}(3)\big) = \{3 y_{1,2} + 3 y_{1,3} + 3 y_{2,3} + 6z,\, - y_{1,2} +  y_{1,3} - y_{2,3},\, - y_{1,2} - 2y_{1,3} - y_{2,3} + 3z,\, 3z\}$,
\begin{itemize}
\item[$\bullet$] $\mathrm{m}_{\mathsf{IF}(3)}(3 y_{1,2} + 3 y_{1,3} + 3 y_{2,3} + 6z) = 1$,
\item[$\bullet$] $\mathrm{m}_{\mathsf{IF}(3)}- y_{1,2} +  y_{1,3} - y_{2,3}) = 1$,
\item[$\bullet$] $\mathrm{m}_{\mathsf{IF}(3)}(- y_{1,2} - 2y_{1,3} - y_{2,3} + 3z) = 2$,
\item[$\bullet$] $\mathrm{m}_{\mathsf{IF}(3)}(3z) = 2$.
\end{itemize}	
\end{itemize}

\begin{theorem}  \label{ThDIF}
For $n \geq 4$, $\mathsf{IF}(n)$ is diagonalizable, and 
\begin{align*}
\mathrm{Sp}\big(\mathsf{IF}(n)\big) = \bigg\{ & \frac{n!}{2} \sum_{\substack{(i,j) \in [n]^2 \\ i<j}} y_{i,j} + n!z,\, (n-2)!nz - (n-2)! \sum_{\substack{(i,j) \in [n]^2 \\ i<j}} (j-i)y_{i,j}, \\
& -(n-3)! \sum_{\substack{(i,j) \in [n]^2 \\ i<j}} \big(n -2(j-i)\big) y_{i,j},\, (n-2)!nz,\, 0 \bigg\} \quad \text{with}
\end{align*}
\begin{itemize}
\item $\displaystyle \mathrm{m}_{\mathsf{IF}(n)}\bigg( \frac{n!}{2} \sum_{\substack{(i,j) \in [n]^2 \\ i<j}} y_{i,j} + n!z \bigg) = 1$,
\item $\displaystyle \mathrm{m}_{\mathsf{IF}(n)}\bigg( (n-2)!nz - (n-2)! \sum_{\substack{(i,j) \in [n]^2 \\ i<j}} (j-i)y_{i,j} \bigg) = n-1$,
\item $\displaystyle \mathrm{m}_{\mathsf{IF}(n)}\bigg( -(n-3)! \sum_{\substack{(i,j) \in [n]^2 \\ i<j}} \big(n -2(j-i)\big) y_{i,j} \bigg) = \binom{n-1}{2}$,
\item $\displaystyle \mathrm{m}_{\mathsf{IF}(n)}\big( (n-2)!nz \big) = (n-1)(n-2)$,
\item $\displaystyle \mathrm{m}_{\mathsf{IF}(n)}(0) = n! -\frac{n}{2}(3n-7) -3$.
\end{itemize}
\end{theorem}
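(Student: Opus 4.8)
Because the $(\sigma,\tau)$-entry of $\mathsf{IF}(n)$ depends only on $\sigma\tau^{-1}$, the matrix $\mathsf{IF}(n)$ represents left multiplication by $a:=\sum_{\pi\in\mathfrak{S}_n}\bigl(\mathrm{inv}_y\,\pi+\mathrm{fix}_z\,\pi\bigr)\pi$ in the left regular representation of the (split semisimple) algebra $\mathbb{R}(y_{1,2},\dots,z)\mathfrak{S}_n$; so I will compute the spectrum of $\rho_\lambda(a)$ on each irreducible module $S^\lambda$, $\lambda\vdash n$, and recover $\mathrm{Sp}(\mathsf{IF}(n))$ and the multiplicities by weighting with $\dim S^\lambda$. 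Write $a=F+I$ with $F:=z\sum_\pi\#\mathrm{Fix}(\pi)\,\pi$ and $I:=\sum_\pi\mathrm{inv}_y(\pi)\,\pi$. Since $\#\mathrm{Fix}$ is the character of the natural permutation module, hence $\chi^{(n)}+\chi^{(n-1,1)}$, the element $F$ is central, acts on $S^\lambda$ by the scalar $\tfrac{n!\,z}{\dim S^\lambda}\langle\#\mathrm{Fix},\chi^\lambda\rangle$ -- namely by $n!\,z$ on $S^{(n)}$, by $n(n-2)!\,z$ on $S^{(n-1,1)}$ and by $0$ elsewhere -- and commutes with $I$; thus $\rho_\lambda(a)=\rho_\lambda(I)+(\text{that scalar})\cdot\mathrm{Id}$ and it remains to understand each $\rho_\lambda(I)$.

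To localise $I$, I use $I=\tfrac12\bigl(\sum_{i<j}y_{i,j}\bigr)\sum_\pi\pi+\tfrac12\sum_{i<j}y_{i,j}\,M_{i,j}$ with $M_{i,j}:=\sum_\pi\mathrm{sgn}\bigl(\pi(i)-\pi(j)\bigr)\,\pi$, where $\sum_\pi\pi$ annihilates every non-trivial $S^\lambda$. The key remark is that $M_{i,j}$ transforms by the character $\mathrm{sgn}\boxtimes\mathbf{1}$ under right multiplication by the Young subgroup $\mathfrak{S}_{\{i,j\}}\times\mathfrak{S}_{[n]\setminus\{i,j\}}$, hence lies in the sum of the simple two-sided ideals $B_\lambda\subseteq\mathbb{R}\mathfrak{S}_n$ for which $\lambda$ is a constituent of $\mathrm{Ind}_{\mathfrak{S}_2\times\mathfrak{S}_{n-2}}^{\mathfrak{S}_n}(\mathrm{sgn}\boxtimes\mathbf{1})=s_{(1,1)}s_{(n-2)}=s_{(n-1,1)}+s_{(n-2,1,1)}$; moreover the $(\mathrm{sgn}\boxtimes\mathbf{1})$-isotypic line in each of $S^{(n-1,1)}$ and $S^{(n-2,1,1)}$ being $1$-dimensional, each non-zero $\rho_\lambda(M_{i,j})$ has rank $1$. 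Therefore $\rho_\lambda(I)=0$ -- and so $\rho_\lambda(a)=0$ -- for every $\lambda\notin\{(n),(n-1,1),(n-2,1,1)\}$; this settles the block $(n-2,2)$ and provides the bulk of the eigenvalue $0$, leaving only $\rho_{(n)}(I)$, $\rho_{(n-1,1)}(I)$, $\rho_{(n-2,1,1)}(I)$.

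I will realise these three irreducibles inside small permutation modules. On the trivial module, $\rho_{(n)}(I)=\sum_\pi\mathrm{inv}_y(\pi)=\tfrac{n!}{2}\sum_{i<j}y_{i,j}$. On $\mathbb{R}^n=S^{(n)}\oplus S^{(n-1,1)}$ (with $S^{(n-1,1)}=\{v:\sum_k v_k=0\}$), a short count gives the $(l,k)$-entry of $\rho_{\mathbb{R}^n}(I)$ as $\sum_{i<j}y_{i,j}\,\#\{\pi:\pi(k)=l,\ \pi(i)>\pi(j)\}$, from which $\rho_{\mathbb{R}^n}(I)e_k$ is a linear combination of the two fixed vectors $(1,\dots,1)$ and $(1,2,\dots,n)$; intersecting their span with $S^{(n-1,1)}$ shows $\rho_{(n-1,1)}(I)$ has rank $\le 1$, and its trace then works out to $-(n-2)!\sum_{i<j}(j-i)y_{i,j}$. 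On $\wedge^2\mathbb{R}^n=S^{(n-1,1)}\oplus S^{(n-2,1,1)}$ (on which $\sum_\pi\pi$ acts as $0$, so $\rho_{\wedge^2}(I)=\tfrac12\sum_{i<j}y_{i,j}\,\rho_{\wedge^2}(M_{i,j})$), a case analysis shows that $M_{i,j}$ kills $e_k\wedge e_l$ when $\{k,l\}\cap\{i,j\}=\varnothing$ and otherwise sends it to a scalar multiple of one of the two $(i,j)$-independent vectors $\sum_{a<b}(a-b+1)\,e_a\wedge e_b$ and $\sum_{a<b}e_a\wedge e_b$; hence $\rho_{\wedge^2}(I)$ has image in a fixed $2$-dimensional space, one of whose lines lies in $S^{(n-1,1)}$ and the other in $S^{(n-2,1,1)}$, so $\rho_{(n-2,1,1)}(I)$ also has rank $\le 1$. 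Computing its trace via $\mathrm{tr}\,\rho_{(n-2,1,1)}(M_{i,j})=\sum_\pi\mathrm{sgn}(\pi(i)-\pi(j))\,\chi^{(n-2,1,1)}(\pi)$, with $\chi^{(n-2,1,1)}=\binom{f_1}{2}-f_2-f_1+1$ ($f_1,f_2$ the numbers of fixed points and of $2$-cycles of $\pi$), yields $-(n-3)!\sum_{i<j}\bigl(n-2(j-i)\bigr)y_{i,j}$.

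Since a rank-one operator of non-zero trace is diagonalisable with spectrum $\{\mathrm{trace},0\}$, all the $\rho_\lambda(a)$, and hence $\mathsf{IF}(n)$, are diagonalisable; reading off the eigenvalues of the $\rho_\lambda(a)$ and weighting by $\dim S^\lambda$ produces exactly the five displayed eigenvalues, with multiplicities $1$, $n-1=\dim S^{(n-1,1)}$, $\binom{n-1}{2}=\dim S^{(n-2,1,1)}$, $(n-1)(n-2)$ (the eigenvalue $n(n-2)!\,z$ occurring $n-2$ times in $\rho_{(n-1,1)}(a)$, taken over the $n-1$ copies of $S^{(n-1,1)}$), and, for $0$, $n!$ minus the sum of the other four, which simplifies to $n!-\tfrac n2(3n-7)-3$; a final check confirms that for $n\ge 4$ these five polynomials are pairwise distinct. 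The computationally substantial parts are the two explicit descriptions of the action of $I$ on $\mathbb{R}^n$ and on $\wedge^2\mathbb{R}^n$ and the two trace evaluations -- the work on $\wedge^2\mathbb{R}^n$ being the main obstacle -- whereas everything else follows from the Wedderburn decomposition of $\mathbb{R}\mathfrak{S}_n$ and, above all, from the localisation $M_{i,j}\in B_{(n-1,1)}\oplus B_{(n-2,1,1)}$.
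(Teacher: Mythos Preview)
Your outline is correct and takes a genuinely different, more representation-theoretic route than the paper. Both arguments start from the same split $a=F+I$ with $F$ central, but then diverge. The paper spends all of Section~2 computing the minimal polynomial of $\mathsf{F}(n)$ through a sequence of combinatorial identities (Lemmas~2.3--2.4, Proposition~2.5); you bypass this entirely with the one-line observation that $\#\mathrm{Fix}=\chi^{(n)}+\chi^{(n-1,1)}$, which immediately gives the scalar by which $F$ acts on each Specht module. (The paper even claims, after Equation~(\ref{ch}), that one cannot get Theorem~\ref{ThF} this way ``as the majority of the $\chi_\lambda^\mu$'s are not known''; your argument shows that only the inner product $\langle\mathrm{fix},\chi^\lambda\rangle$ is needed, and that is classical.) For $I$, the paper imports the full spectrum of $\mathsf{I}(n)$ from~\cite{Ra1} and then does an explicit Specht-module computation on $S^{(n-1,1)}$ (Section~3) to match the eigenspaces of $F$ and $I$; you instead localise $M_{i,j}$ to $B_{(n-1,1)}\oplus B_{(n-2,1,1)}$ via Frobenius reciprocity and Pieri, and then work on $\mathbb{R}^n$ and $\wedge^2\mathbb{R}^n$. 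Your approach is self-contained (no appeal to~\cite{Ra1}) and explains structurally \emph{why} only three Specht modules contribute; the paper's approach is more elementary in its tools but pays for it with longer computations and an external citation.

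One small wording issue: when you say the fixed $2$-dimensional span in $\wedge^2\mathbb{R}^n$ has ``one of whose lines in $S^{(n-1,1)}$ and the other in $S^{(n-2,1,1)}$'', the basis vectors $w_1=\sum_{a<b}(a-b+1)e_a\wedge e_b$ and $w_2=\sum_{a<b}e_a\wedge e_b$ do not themselves lie in the irreducible summands. What is true (and sufficient) is that $w_1-w_2=\sum_{a<b}(a-b)e_a\wedge e_b=e\wedge\bigl(\sum_k(-k)e_k\bigr)$ lies in the copy of $S^{(n-1,1)}$, so $\operatorname{span}\{w_1,w_2\}\cap S^{(n-2,1,1)}$ is at most one-dimensional, and hence $\rho_{(n-2,1,1)}(I)$ has rank $\le 1$ as you need. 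With that adjustment the argument goes through.
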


\begin{corollary}  \label{CoDIF}
Let $n \geq 4$, and $\mathsf{DIF}(n) := \big(\mathrm{des}_x(\sigma \tau^{-1}) + \mathrm{inv}_y(\sigma \tau^{-1}) + \mathrm{fix}_z(\sigma \tau^{-1})\big)_{\sigma, \tau \in \mathfrak{S}_n}$. Then, $\mathsf{DIF}(n)$ is diagonalizable, and 
\begin{align*}
	\mathrm{Sp}\big(\mathsf{DIF}(n)\big) = \bigg\{ & \frac{n!}{2} \Big( \sum_{i \in [n-1]} x_i + \sum_{\substack{(i,j) \in [n]^2 \\ i<j}} y_{i,j} \Big) + n!z,\, -(n-2)! \Big( \sum_{i \in [n-1]} x_i + \sum_{\substack{(i,j) \in [n]^2 \\ i<j}} (j-i)y_{i,j} - nz \Big), \\
	& -(n-2)! \sum_{i \in [n-1]} x_i - (n-3)! \sum_{\substack{(i,j) \in [n]^2 \\ i<j}} \big(n -2(j-i)\big) y_{i,j},\, (n-2)!nz,\, 0 \bigg\} \quad \text{with}
\end{align*}
\begin{itemize}
	\item $\displaystyle \mathrm{m}_{\mathsf{DIF}(n)}\bigg( \frac{n!}{2} \Big( \sum_{i \in [n-1]} x_i + \sum_{\substack{(i,j) \in [n]^2 \\ i<j}} y_{i,j} \Big) + n!z \bigg) = 1$,
	\item $\displaystyle \mathrm{m}_{\mathsf{DIF}(n)}\bigg( -(n-2)! \Big( \sum_{i \in [n-1]} x_i + \sum_{\substack{(i,j) \in [n]^2 \\ i<j}} (j-i)y_{i,j} - nz \Big) \bigg) = n-1$,
	\item $\displaystyle \mathrm{m}_{\mathsf{DIF}(n)}\bigg( -(n-2)! \sum_{i \in [n-1]} x_i - (n-3)! \sum_{\substack{(i,j) \in [n]^2 \\ i<j}} \big(n -2(j-i)\big) y_{i,j} \bigg) = \binom{n-1}{2}$,
	\item $\displaystyle \mathrm{m}_{\mathsf{DIF}(n)}\big( (n-2)!nz \big) = (n-1)(n-2)$,
	\item $\displaystyle \mathrm{m}_{\mathsf{DIF}(n)}(0) = n! -\frac{n}{2}(3n-7) -3$.
\end{itemize}
\end{corollary}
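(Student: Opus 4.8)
The plan is to diagonalize $\mathsf{IF}(n)$ through the Wedderburn decomposition of the group algebra. Note first that $\mathsf{IF}(n)$ is exactly the matrix, in the regular representation of $\mathbb{R}(x,y,z)\,\mathfrak{S}_n$, of the element $\mathcal{F} := \sum_{\rho \in \mathfrak{S}_n}\bigl(\mathrm{inv}_y(\rho)+\mathrm{fix}_z(\rho)\bigr)\rho$: indeed the $(\sigma,\tau)$-entry of left multiplication by $\mathcal F$ is $(\mathrm{inv}_y+\mathrm{fix}_z)(\sigma\tau^{-1})$. Since $\mathfrak{S}_n$ has rational representation theory, $K\mathfrak{S}_n\cong\bigoplus_{\lambda\vdash n}\mathrm{Mat}_{d_\lambda}(K)$ for any field $K$ of characteristic $0$, and under this the regular representation becomes $\bigoplus_{\lambda}\rho_\lambda^{\oplus d_\lambda}$, where $\rho_\lambda$ is the irreducible indexed by $\lambda$, of dimension $d_\lambda$ and character $\chi^\lambda$. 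Hence $\mathsf{IF}(n)$ is similar to $\bigoplus_{\lambda\vdash n}\rho_\lambda(\mathcal F)^{\oplus d_\lambda}$, so $\mathrm{Sp}\bigl(\mathsf{IF}(n)\bigr)=\bigcup_\lambda\mathrm{Sp}\bigl(\rho_\lambda(\mathcal F)\bigr)$, $\mathrm{m}_{\mathsf{IF}(n)}(\nu)=\sum_\lambda d_\lambda\,\mathrm{m}_{\rho_\lambda(\mathcal F)}(\nu)$, and $\mathsf{IF}(n)$ is diagonalizable iff every $\rho_\lambda(\mathcal F)$ is. I would then split $\mathcal F = \sum_{i<j}y_{i,j}A_{i,j} + z\sum_\rho(\#\mathrm{Fix}\,\rho)\rho$, where $A_{i,j}:=\sum_{\rho:\rho(i)>\rho(j)}\rho$, and write $\widehat{\mathrm{inv}}_y(\lambda):=\rho_\lambda\bigl(\sum_{i<j}y_{i,j}A_{i,j}\bigr)$, $\widehat{\mathrm{fix}}_z(\lambda):=z\,\rho_\lambda\bigl(\sum_\rho(\#\mathrm{Fix}\,\rho)\rho\bigr)$, so that $\rho_\lambda(\mathcal F)=\widehat{\mathrm{inv}}_y(\lambda)+\widehat{\mathrm{fix}}_z(\lambda)$.

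The fixed-point piece is immediate from Schur orthogonality: the permutation character of $\mathfrak{S}_n$ on $[n]$ is $\chi^{(n)}+\chi^{(n-1,1)}$, so $\#\mathrm{Fix}=\chi^{(n)}+\chi^{(n-1,1)}$, and since $\sum_\rho\overline{\chi^\mu(\rho)}\,\rho_\lambda(\rho)=\tfrac{n!}{d_\lambda}[\mu=\lambda]\,\mathrm{Id}$ we get $\widehat{\mathrm{fix}}_z((n))=n!z$, $\widehat{\mathrm{fix}}_z((n-1,1))=(n-2)!nz\,\mathrm{Id}$, and $\widehat{\mathrm{fix}}_z(\lambda)=0$ otherwise. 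For the inversion piece, write $A_{i,j}=\tfrac12(\Sigma+E_{i,j})$ with $\Sigma=\sum_\rho\rho$ and $E_{i,j}=\sum_\rho\mathrm{sgn}\bigl(\rho(i)-\rho(j)\bigr)\rho$. The structural key is that, letting $\mathfrak{S}_n$ act on $\wedge^2\mathbb{R}^n$ by permuting the standard basis $e_1,\dots,e_n$ and putting $S:=\sum_{c<d}e_c\wedge e_d$, one has $\mathrm{sgn}(\rho(i)-\rho(j))=-\langle\rho(e_i\wedge e_j),S\rangle$; thus $E_{i,j}$ is, up to sign, a matrix-coefficient element of the representation $\wedge^2\mathbb{R}^n$, which decomposes as $V^{(n-1,1)}\oplus V^{(n-2,1,1)}$. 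By the Schur orthogonality relations for matrix coefficients, $\rho_\mu(E_{i,j})=0$ whenever $V^\mu$ is not a constituent of $\wedge^2\mathbb{R}^n$; since also $\rho_\mu(\Sigma)=0$ for $\mu\ne(n)$, it follows that $\rho_\mu(A_{i,j})=0$, hence $\rho_\mu(\mathcal F)=0$, for every $\mu\notin\{(n),(n-1,1),(n-2,1,1)\}$. Each such block contributes only the eigenvalue $0$, of total multiplicity $\sum_{\mu\notin\{(n),(n-1,1),(n-2,1,1)\}}d_\mu^2$.

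It remains to compute the three active blocks. For $\lambda=(n)$ a direct count gives $\sum_\rho\mathrm{inv}_y(\rho)=\tfrac{n!}{2}\sum_{i<j}y_{i,j}$ and $\sum_\rho\#\mathrm{Fix}\,\rho=n!$, so the eigenvalue is $\tfrac{n!}{2}\sum y_{i,j}+n!z$ with multiplicity $1$. For $\lambda=(n-1,1)$, realize $V^{(n-1,1)}=\{v\in\mathbb{R}^n:\sum_k v_k=0\}$ and count the entries of $A_{i,j}$ on $\mathbb{R}^n$ ($A_{i,j}e_\ell=\tfrac{(n-1)!}{2}\sum_k e_k$ for $\ell\notin\{i,j\}$, $A_{i,j}e_i=(n-2)!\sum_k(k-1)e_k$, $A_{i,j}e_j=(n-2)!\sum_k(n-k)e_k$): one obtains $A_{i,j}v=(n-2)!(v_i-v_j)\,w$ for sum-zero $v$, with $w:=\bigl(k-\tfrac{n+1}{2}\bigr)_{k=1}^n\in V^{(n-1,1)}$. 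Thus $\widehat{\mathrm{inv}}_y((n-1,1))=(n-2)!\,w\bigl(\sum_{i<j}y_{i,j}(e_i-e_j)\bigr)^{\!\top}$ is rank one, with single nonzero eigenvalue $(n-2)!\sum_{i<j}y_{i,j}(w_i-w_j)=-(n-2)!\sum_{i<j}(j-i)y_{i,j}$ on $w$; adding $(n-2)!nz\,\mathrm{Id}$ yields the eigenvalues $(n-2)!nz-(n-2)!\sum(j-i)y_{i,j}$ (multiplicity $1$, on $w$) and $(n-2)!nz$ (multiplicity $n-2$, on $w^\perp\cap V^{(n-1,1)}$); the block is diagonalizable since $\sum(j-i)y_{i,j}\ne0$. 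Multiplying by $d_{(n-1,1)}=n-1$ gives total multiplicities $n-1$ and $(n-1)(n-2)$. For $\lambda=(n-2,1,1)$, realize it as the $V^{(n-2,1,1)}$-summand of $\wedge^2\mathbb{R}^n$ and compute $\rho_{\wedge^2}(A_{i,j})$ by counting $\#\{\rho:\rho(i)>\rho(j),\ \rho(a)=c,\ \rho(b)=d\}$, split according to $|\{i,j\}\cap\{a,b\}|\in\{0,1,2\}$; the outcome is $\rho_{\wedge^2}(A_{i,j})(e_a\wedge e_b)\in\{0,\ \pm(n-3)!\,W,\ -(n-2)!\,S\}$ with $W:=\sum_{c<d}(c-d+1)e_c\wedge e_d$. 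Because $\sum_{i<j}y_{i,j}A_{i,j}$ is $\mathfrak{S}_n$-equivariant it respects $\wedge^2\mathbb{R}^n=V^{(n-1,1)}\oplus V^{(n-2,1,1)}$; the $V^{(n-1,1)}$-part is the operator already found, and the $V^{(n-2,1,1)}$-part is again rank one, so its unique nonzero eigenvalue equals $\mathrm{tr}\,\bigl(\widehat{\mathrm{inv}}_y|_{\wedge^2\mathbb{R}^n}\bigr)-\mathrm{tr}\,\bigl(\widehat{\mathrm{inv}}_y|_{V^{(n-1,1)}}\bigr)$, which the resulting double sum evaluates in closed form to $-(n-3)!\sum_{i<j}\bigl(n-2(j-i)\bigr)y_{i,j}$; nonzero, hence diagonalizable. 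Multiplying by $d_{(n-2,1,1)}=\binom{n-1}{2}$ gives total multiplicity $\binom{n-1}{2}$.

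Assembling the blocks, $\mathsf{IF}(n)$ is diagonalizable, the four nonzero eigenvalues are as stated, and $0$ accumulates multiplicity $n!-1-(n-1)-(n-1)(n-2)-\binom{n-1}{2}=n!-\tfrac n2(3n-7)-3$; the hypothesis $n\ge4$ ensures that $(n-2,1,1)$ is a genuine partition distinct from $(n-1,1)$ and that the five listed eigenvalues are pairwise distinct in $\mathbb{R}[x,y,z]$, so no further coincidences arise. Corollary~\ref{CoDIF} follows immediately: $\mathrm{des}_x(\sigma)=\sum_{i\in[n-1]}x_i[\sigma(i)>\sigma(i+1)]$ shows that $\mathsf{DIF}(n)$ corresponds to $\mathcal F+\sum_i x_iA_{i,i+1}$, and $A_{i,i+1}$ is the $j=i+1$ specialization of $A_{i,j}$, hence supported on the same three blocks; on $(n-1,1)$ it adds $-(n-2)!\sum_i x_i$ to the distinguished eigenvalue (since $w_i-w_{i+1}=-1$), and on $(n-2,1,1)$ it adds $-(n-3)!(n-2)\sum_i x_i=-(n-2)!\sum_i x_i$ (the $j=i+1$ case of $n-2(j-i)=n-2$), with all multiplicities unchanged, giving exactly the stated spectrum of $\mathsf{DIF}(n)$. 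I expect the one genuinely laborious step to be the case analysis computing $\rho_{\wedge^2}(A_{i,j})$ together with the closed-form evaluation of $\mathrm{tr}\,\bigl(\widehat{\mathrm{inv}}_y|_{\wedge^2\mathbb{R}^n}\bigr)$; everything preceding it is standard representation theory and everything following it is bookkeeping.
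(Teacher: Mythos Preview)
Your final reduction of Corollary~\ref{CoDIF} to Theorem~\ref{ThDIF} matches the paper's one-line proof exactly: both observe that $\mathrm{des}_x$ arises from $\mathrm{inv}_y$ by the substitution $y_{i,i+1}\mapsto x_i+y_{i,i+1}$. Your proof of Theorem~\ref{ThDIF} itself, however, follows a genuinely different route. The paper works from the outside in: it obtains $\mathrm{Sp}\big(\mathsf{F}(n)\big)$ by computing the minimal polynomial of $\mathsf{f}_z(n)$ through a chain of fixed-point counting lemmas (Lemmas~\ref{Leij}, \ref{Leijk}, Proposition~\ref{Prfix}, Corollary~\ref{CoF}), handles the $(n-1,1)$ block of $\mathsf{i}_y+\mathsf{f}_z$ by further explicit counting (Lemma~\ref{LeYij}, Propositions~\ref{Prfix2} and \ref{Prinv}), and imports the full spectrum of $\mathsf{I}(n)$ from \cite[Theorem~1.4]{Ra1}. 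You instead identify the support of $\mathcal F$ a priori: the identity $\#\mathrm{Fix}=\chi^{(n)}+\chi^{(n-1,1)}$ together with Schur orthogonality annihilates $\mathsf{f}_z$ on all other blocks in one line, and recognising $E_{i,j}$ as a matrix coefficient of $\wedge^2\mathbb{R}^n\cong V^{(n-1,1)}\oplus V^{(n-2,1,1)}$ annihilates $\mathsf{i}_y$ outside $\{(n-1,1),(n-2,1,1)\}$ equally cheaply; the rank-$1$ structure on each surviving block (total rank $\le 2$ on $\wedge^2$, rank $1$ already on $V^{(n-1,1)}$, hence rank $\le 1$ on $V^{(n-2,1,1)}$) then yields the eigenvalues by trace. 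This is more conceptual and self-contained (no external appeal to \cite{Ra1}), at the price of the $\wedge^2$ trace computation you correctly flag as the one laborious step. One correction of wording: your sentence ``$\sum y_{i,j}A_{i,j}$ is $\mathfrak{S}_n$-equivariant'' is false as stated---a generic group-algebra element is not an intertwiner---but the conclusion that it preserves the splitting $V^{(n-1,1)}\oplus V^{(n-2,1,1)}$ is still correct, because any element of $K[G]$ respects the isotypic decomposition of a $G$-module (the isotypic projectors are central idempotents), and here the two summands are non-isomorphic, hence equal to their isotypic components.
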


\begin{proof}
We get $\mathsf{DIF}(n)$ from $\mathsf{IF}(n)$ by replacing $y_{i,i+1}$ with $x_i + y_{i,i+1}$ for every $i \in [n-1]$.
\end{proof}

\noindent Note that $\displaystyle \frac{\mathsf{DIF}(n)}{\frac{n!}{2} \Big( \sum_{i \in [n-1]} x_i + \sum_{\substack{(i,j) \in [n]^2 \\ i<j}} y_{i,j} \Big) + n!z}$ is a stochastic matrix. Corollary~\ref{CoDIF} is clearly a generalization of \cite[Theorem~1.1]{Ra1}, and of \cite[Theorem~1.4]{Ra1}. The multinomial statistics have the advantage of being able to consider several statistics at the same time. With $\mathrm{fix}\,\sigma := \#\mathrm{Fix}(\sigma)$, and the Mahonian statistics major index $\displaystyle \mathrm{maj}\,\sigma := \sum_{i \in \mathrm{Des}\,\sigma}i$ and number of inversions $\mathrm{inv}\,\sigma := \#\mathrm{Inv}\,\sigma$ for instance, we get the following result.

\begin{corollary}  
Let $n \geq 4$, and $\mathsf{Mif}(n) := \big(\mathrm{maj}(\sigma \tau^{-1}) + \mathrm{inv}(\sigma \tau^{-1}) + \mathrm{fix}(\sigma \tau^{-1})\big)_{\sigma, \tau \in \mathfrak{S}_n}$. Then, $\mathsf{Mif}(n)$ is diagonalizable, and
$$\mathrm{Sp}\big(\mathsf{Mif}(n)\big) = \Big\{n!\binom{n}{2}+n!,\, n(2-n)(n+5)\frac{(n-2)!}{6},\, -\frac{2n!}{6},\, n(n-2)!,\, 0 \Big\} \quad \text{with}$$
\begin{itemize}
	\item $\displaystyle \mathrm{m}_{\mathsf{Mif}(n)}\bigg(n!\binom{n}{2}+n!\bigg) = 1$,
	\item $\displaystyle \mathrm{m}_{\mathsf{Mif}(n)}\Big(n(2-n)(n+5)\frac{(n-2)!}{6}\Big) = n-1$,
	\item $\displaystyle \mathrm{m}_{\mathsf{Mif}(n)}\Big(-\frac{2n!}{3}\Big) = \binom{n-1}{2}$,
	\item $\displaystyle \mathrm{m}_{\mathsf{Mif}(n)}\big(n(n-2)!\big) = (n-1)(n-2)$,
	\item $\displaystyle \mathrm{m}_{\mathsf{Mif}(n)}(0) = n! -\frac{n}{2}(3n-7) -3$.
\end{itemize}
\end{corollary}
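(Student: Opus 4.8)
The plan is to derive the statement from Corollary~\ref{CoDIF} by a specialization of variables. Since $\mathrm{maj}\,\sigma = \sum_{i \in \mathrm{Des}\,\sigma} i$, $\mathrm{inv}\,\sigma = \sum_{(i,j) \in \mathrm{Inv}\,\sigma} 1$, and $\mathrm{fix}\,\sigma = \#\mathrm{Fix}(\sigma)\cdot 1$, the ring homomorphism $\mathbb{R}[x_1,\dots,x_{n-1},y_{1,2},\dots,y_{n,n-1},z] \to \mathbb{R}$ defined by $x_i \mapsto i$, $y_{i,j} \mapsto 1$, $z \mapsto 1$ sends $\mathrm{des}_x\,\sigma$ to $\mathrm{maj}\,\sigma$, $\mathrm{inv}_y\,\sigma$ to $\mathrm{inv}\,\sigma$, and $\mathrm{fix}_z\,\sigma$ to $\mathrm{fix}\,\sigma$; applied entrywise it therefore carries $\mathsf{DIF}(n)$ to $\mathsf{Mif}(n)$.

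First I would evaluate the four sums occurring in the eigenvalues of Corollary~\ref{CoDIF} under this substitution: $\sum_{i \in [n-1]} i = \binom{n}{2}$, $\sum_{i<j} 1 = \binom{n}{2}$, $\sum_{i<j}(j-i) = \sum_{d=1}^{n-1} d(n-d) = \binom{n+1}{3}$, and $\sum_{i<j}\big(n - 2(j-i)\big) = n\binom{n}{2} - 2\binom{n+1}{3} = \binom{n}{3}$. Substituting these and simplifying with $(n-2)! = (n-2)(n-3)!$ and $n(n-1)(n-2)(n-3)! = n!$ turns the five expressions of Corollary~\ref{CoDIF} into $n!\binom{n}{2}+n!$, $\tfrac{n(2-n)(n+5)}{6}(n-2)!$, $-\tfrac{2n!}{3}$, $n(n-2)!$, and $0$, which are the claimed eigenvalues.

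It remains to see that the multiplicities and the diagonalizability survive the specialization, and I would argue this through polynomial identities rather than by directly "reading off" Corollary~\ref{CoDIF}. If $\lambda_1,\dots,\lambda_5 \in \mathbb{R}[x,y,z]$ are the eigenvalues of $\mathsf{DIF}(n)$ with multiplicities $m_1,\dots,m_5$, then diagonalizability gives $\prod_{k=1}^{5}\big(\mathsf{DIF}(n) - \lambda_k I\big) = 0$ and $\mathrm{tr}\,\mathsf{DIF}(n)^{\ell} = \sum_{k=1}^{5} m_k \lambda_k^{\ell}$ for every $\ell \geq 0$; these are identities between polynomials in the variables, hence remain valid after the substitution, yielding $\prod_{k=1}^{5}\big(\mathsf{Mif}(n) - \bar\lambda_k I\big) = 0$ and $\mathrm{tr}\,\mathsf{Mif}(n)^{\ell} = \sum_{k=1}^{5} m_k \bar\lambda_k^{\ell}$, where $\bar\lambda_k$ is the specialized eigenvalue. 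One then checks that $\bar\lambda_1,\dots,\bar\lambda_5$ are pairwise distinct for $n \geq 4$ --- the only delicate comparison being $\bar\lambda_2$ versus $\bar\lambda_3$, which coincide exactly when $n = 3$, whence the hypothesis $n \geq 4$. Given distinctness, the first identity exhibits a polynomial with simple roots annihilating $\mathsf{Mif}(n)$, so $\mathsf{Mif}(n)$ is diagonalizable, and the trace identities for $\ell = 0,1,\dots,4$ together with the invertibility of the Vandermonde matrix built from the distinct $\bar\lambda_k$ pin the multiplicity of $\bar\lambda_k$ in $\mathsf{Mif}(n)$ to $m_k$.

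The only real obstacle is this last step: a specialization could in principle merge two eigenvalues, and the transfer of diagonalizability is not automatic, so the careful reduction to the coefficient-wise identities $\prod_k(\mathsf{DIF}(n)-\lambda_k I)=0$ and $\mathrm{tr}\,\mathsf{DIF}(n)^{\ell}=\sum_k m_k\lambda_k^{\ell}$, followed by the separate verification that $\bar\lambda_i \neq \bar\lambda_j$ for $i \neq j$ when $n \geq 4$, is what makes the argument airtight. Everything else reduces to the elementary binomial arithmetic of the second paragraph.
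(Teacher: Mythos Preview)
Your proof is correct and follows the same route as the paper, which simply reads ``Set $x_i=i$, $y_{i,j}=1$, and $z=1$ in Corollary~\ref{CoDIF}.'' You have in fact been more careful than the paper: the verification that the five specialized eigenvalues remain pairwise distinct for $n\geq 4$ (in particular the check that $\bar\lambda_2=\bar\lambda_3$ forces $n=3$) and the argument via the annihilating polynomial and trace identities are exactly what is needed to justify that diagonalizability and the multiplicities survive the specialization, a point the paper leaves implicit.
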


\begin{proof}
Set $x_i=i$, $y_{i,j}=1$, and $z=1$ in Corollary~\ref{CoDIF}.
\end{proof}

\noindent Furthermore, Renteln computed the eigenvalues and multiplicities of $\big(\mathrm{inv}(\sigma \tau^{-1})\big)_{\sigma, \tau \in \mathfrak{S}_n}$ as distance matrix of the Cayley graph of $\mathfrak{S}_n$ \cite[§~4.8]{Re}.

\smallskip

\noindent This article is structured as follows. Define the matrix $\mathsf{F}(n) := \big(\mathrm{fix}_z(\sigma \tau^{-1})\big)_{\sigma, \tau \in \mathfrak{S}_n}$. 

\begin{theorem}  \label{ThF}
For $n \geq 4$, $\mathsf{F}(n)$ is diagonalizable, and $\mathrm{Sp}\big(\mathsf{F}(n)\big) = \big\{n!z,\, n(n-2)!z,\, 0\big\}$ with
\begin{itemize}
\item $\mathrm{m}_{\mathsf{F}(n)}(n!z) = 1$,
\item $\mathrm{m}_{\mathsf{F}(n)}\big(n(n-2)!z\big) = (n-1)^2$,
\item $\mathrm{m}_{\mathsf{F}(n)}(0) = n! - (n-1)^2 -1$.
\end{itemize} 
\end{theorem}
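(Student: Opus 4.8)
The plan is to use the representation-theoretic structure of the matrix $\mathsf{F}(n)$. Since $\mathsf{F}(n) = \big(\mathrm{fix}_z(\sigma\tau^{-1})\big)_{\sigma,\tau}$ is the matrix of right multiplication in the group algebra $\mathbb{R}[\mathfrak{S}_n]$ by the central-like element $\sum_{\pi \in \mathfrak{S}_n} \mathrm{fix}_z(\pi)\,\pi = z \sum_{\pi} \#\mathrm{Fix}(\pi)\,\pi$, it acts on the regular representation. First I would observe that $\#\mathrm{Fix}$ is a class function — indeed it is exactly the character of the defining (permutation) representation $\mathbb{R}^n$ of $\mathfrak{S}_n$, which decomposes as the trivial representation plus the standard $(n-1)$-dimensional irreducible. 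Therefore $\sum_{\pi} \#\mathrm{Fix}(\pi)\,\pi$ is a central element of $\mathbb{R}[\mathfrak{S}_n]$, and by Schur's lemma it acts as a scalar on each irreducible component $V_\lambda$ occurring in the regular representation.

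The key computation is then the scalar by which $z\sum_\pi \#\mathrm{Fix}(\pi)\,\pi$ acts on the irreducible $V_\lambda$ indexed by a partition $\lambda \vdash n$. By the standard formula for the action of a sum $\sum_\pi f(\pi)\pi$ with $f$ a class function, this scalar equals $\dfrac{z}{\dim V_\lambda}\sum_{\pi}\#\mathrm{Fix}(\pi)\,\chi^\lambda(\pi) = \dfrac{z\,|\mathfrak{S}_n|}{\dim V_\lambda}\,\langle \#\mathrm{Fix},\,\chi^\lambda\rangle$. Using $\#\mathrm{Fix} = \chi^{\mathrm{triv}} + \chi^{\mathrm{std}} = \chi^{(n)} + \chi^{(n-1,1)}$ and orthonormality of irreducible characters, this inner product is $1$ when $\lambda = (n)$, $1$ when $\lambda = (n-1,1)$, and $0$ otherwise. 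Hence the eigenvalue on $V_{(n)}$ is $n!z$ (as $\dim V_{(n)} = 1$), the eigenvalue on $V_{(n-1,1)}$ is $\dfrac{n!}{n-1}z = n(n-2)!z$, and the eigenvalue on every other $V_\lambda$ is $0$. Since in the regular representation each $V_\lambda$ appears with multiplicity $\dim V_\lambda$, the multiplicity of the eigenvalue $n!z$ is $(\dim V_{(n)})^2 = 1$, the multiplicity of $n(n-2)!z$ is $(\dim V_{(n-1,1)})^2 = (n-1)^2$, and the multiplicity of $0$ is $\sum_{\lambda \neq (n),(n-1,1)} (\dim V_\lambda)^2 = n! - 1 - (n-1)^2$. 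Diagonalizability is immediate because the regular representation of $\mathbb{R}[\mathfrak{S}_n]$ over $\mathbb{R}$ is semisimple, so $\mathsf{F}(n)$ is similar to a block-diagonal matrix of scalar blocks.

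The only genuine subtlety — the step I would flag as the main point to get right rather than a true obstacle — is the bookkeeping for small $n$: the claim is stated for $n \geq 4$, and for $n \leq 3$ the partitions $(n)$ and $(n-1,1)$ can coincide with others or the formula $n! - 1 - (n-1)^2$ can fail to be a valid (nonnegative) multiplicity or the three listed eigenvalues can collide, which is why those cases are handled separately in the introduction. For $n \geq 4$ one must also check that the three eigenvalues $n!z$, $n(n-2)!z$, and $0$ are genuinely distinct as elements of $\mathbb{R}[z]$ (equivalently, that $n! \neq n(n-2)!$, i.e. $(n-1) \neq 1$, which holds for $n \geq 3$), so that the stated multiplicities are exactly the dimensions of the corresponding eigenspaces and nothing further merges. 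Everything else is the routine invocation of Schur's lemma, the character table facts $\#\mathrm{Fix} = \chi^{(n)} + \chi^{(n-1,1)}$ and $\dim V_{(n-1,1)} = n-1$, and the Wedderburn decomposition $\mathbb{R}[\mathfrak{S}_n] \cong \bigoplus_\lambda \mathrm{Mat}_{\dim V_\lambda}(\mathbb{R})$.
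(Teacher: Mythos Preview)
Your proof is correct and takes a genuinely different, more conceptual route than the paper's. The paper proceeds combinatorially: it first establishes a battery of fixed-point counting identities (Lemmas~\ref{Leij} and~\ref{Leijk}), uses them to prove a relation for $\sum_\sigma \mathrm{fix}_z(\sigma)\,\mathrm{fix}_z(\sigma^{-1}\tau)$ (Proposition~\ref{Prfix}), derives from this the minimal polynomial of $\mathsf{F}(n)$ (Corollary~\ref{CoF}), and finally extracts the multiplicities by a trace argument combined with Lemma~\ref{LeJ}. You bypass all of this by recognizing that $\mathrm{fix}$ is itself the permutation character, hence $\mathrm{fix} = \chi^{(n)} + \chi^{(n-1,1)}$, so orthonormality of irreducible characters immediately yields the scalar by which the central element $\sum_\pi \mathrm{fix}(\pi)\,\pi$ acts on each Specht module, and the Wedderburn decomposition gives the multiplicities. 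This directly contradicts the paper's remark just below Equation~(\ref{ch}) that Theorem~\ref{ThF} cannot be obtained from the Schur-lemma formula ``as the majority of the $\chi_\lambda^\mu$'s are not known''; your key observation is precisely that individual character values are never needed, only the decomposition of $\mathrm{fix}$ into irreducibles, which is classical. The paper's longer route does buy one thing: the counting lemmas are reused in Section~\ref{SeSp} to compute $\mathscr{X}_{S^{(n-1,1)}}(\mathsf{f}_z(n))$ explicitly, although Proposition~\ref{Prfix2} also follows instantly from your argument. One cosmetic slip: in the convention $(\mathrm{fix}_z(\sigma\tau^{-1}))_{\sigma,\tau}$ the matrix $\mathsf{F}(n)$ represents \emph{left} multiplication, not right, but this is immaterial since the element is central (and $\mathsf{F}(n)$ is symmetric anyway, as $\mathrm{fix}$ is inversion-invariant).
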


\noindent We prove Theorem~\ref{ThF} in Section~\ref{SeF}. Define $\mathsf{i}_y(n), \mathsf{f}_z(n) \in \mathbb{R}[y_{1,2}, \dots, y_{n,n-1},z][\mathfrak{S}_n]$ by
$$\mathsf{i}_y(n) := \sum_{\sigma \in \mathfrak{S}_n} \mathrm{inv}_y(\sigma)\sigma \quad \text{and} \quad \mathsf{f}_z(n) := \sum_{\sigma \in \mathfrak{S}_n} \mathrm{fix}_z(\sigma)\sigma.$$
Note that since $\mathsf{f}_z(n)$ is in the center of $\mathbb{R}[y_{1,2}, \dots, y_{n,n-1},z][\mathfrak{S}_n]$, then $\mathsf{i}_y(n) \mathsf{f}_z(n) = \mathsf{f}_z(n) \mathsf{i}_y(n)$.

\noindent Let $\mathscr{X}_M: \mathbb{R}[y_{1,2}, \dots, y_{n,n-1},z][\mathfrak{S}_n] \rightarrow \mathbb{R}[y_{1,2}, \dots, y_{n,n-1},z]^{d \times d}$ be the matrix representation of $\mathbb{R}[y_{1,2}, \dots, y_{n,n-1},z][\mathfrak{S}_n]$ on a module $M \subseteq \mathbb{R}[y_{1,2}, \dots, y_{n,n-1},z][\mathfrak{S}_n]$ of degree $d$. Denote by $\mathrm{Par}(n)$ the set formed by the partitions of $n$. For $\lambda \in \mathrm{Par}(n)$, let $\mathfrak{S}_n^{\lambda}$ be the conjugacy class of $\mathfrak{S}_n$ associated to $\lambda$, and $S^{\lambda}$ the Specht module of degree $d_{\lambda}$ associated to $\lambda$.

\begin{theorem}  \label{ThSp}
For $n \geq 4$, $\mathscr{X}_{S^{(n-1,1)}}\big(\mathsf{i}_y(n) + \mathsf{f}_z(n)\big)$ is diagonalizable, and
$$\mathrm{Sp}\Big(\mathscr{X}_{S^{(n-1,1)}}\big(\mathsf{i}_y(n) + \mathsf{f}_z(n)\big)\Big) = \Big\{(n-2)! \big(nz - \sum_{\substack{(i,j) \in [n]^2 \\ i<j}} (j-i)y_{i,j}\big),\, n(n-2)!z\Big\} \quad \text{with}$$
\begin{itemize}
	\item $\displaystyle \mathrm{m}_{\mathscr{X}_{S^{(n-1,1)}}\big(\mathsf{i}_y(n) + \mathsf{f}_z(n)\big)}\Big((n-2)! \big(nz - \sum_{\substack{(i,j) \in [n]^2 \\ i<j}} (j-i)y_{i,j}\big)\Big) = 1$,
	\item $\mathrm{m}_{\mathscr{X}_{S^{(n-1,1)}}\big(\mathsf{i}_y(n) + \mathsf{f}_z(n)\big)}\big(n(n-2)!z\big) = n-2$.
\end{itemize} 
\end{theorem}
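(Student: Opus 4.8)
The plan is to realize the operator $\mathscr{X}_{S^{(n-1,1)}}\big(\mathsf{i}_y(n)+\mathsf{f}_z(n)\big)$ as an explicit small matrix using a convenient model of the Specht module $S^{(n-1,1)}$, then diagonalize it by exploiting the symmetry of the inversion statistic and the centrality of $\mathsf{f}_z(n)$. Since $\mathsf{f}_z(n)$ lies in the center of the group algebra, it acts on the irreducible module $S^{(n-1,1)}$ as a scalar by Schur's lemma; that scalar is $\tfrac{1}{d_{(n-1,1)}}\sum_{\sigma}\mathrm{fix}_z(\sigma)\,\chi^{(n-1,1)}(\sigma)$, where $d_{(n-1,1)}=n-1$. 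Using $\chi^{(n-1,1)}(\sigma)=\mathrm{fix}(\sigma)-1$ and the classical moment identities $\sum_\sigma \mathrm{fix}(\sigma)=n!$ and $\sum_\sigma\mathrm{fix}(\sigma)^2=2\cdot n!$ (for $n\ge 2$), one computes that this scalar equals $n(n-2)!\,z$. So $\mathscr{X}_{S^{(n-1,1)}}\big(\mathsf{f}_z(n)\big)=n(n-2)!z\cdot I_{n-1}$, and the whole problem reduces to diagonalizing $A:=\mathscr{X}_{S^{(n-1,1)}}\big(\mathsf{i}_y(n)\big)$ and then adding $n(n-2)!z$ to every eigenvalue.

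To compute $A$, I would use the standard permutation-module model: $S^{(n-1,1)}$ is the subspace $\{v\in\mathbb{R}^n:\sum_k v_k=0\}$ of the natural permutation representation, on which $\mathfrak{S}_n$ acts by permuting coordinates $e_1,\dots,e_n$. In this basis, $\mathscr{X}(\mathsf{i}_y(n))$ acting on $\mathbb{R}^n$ has $(k,\ell)$ entry equal to $\sum_{\sigma:\sigma(\ell)=k}\mathrm{inv}_y(\sigma)$. The key structural fact is that this matrix is $\mathfrak{S}_n$-equivariant only in a weak sense because the $y_{i,j}$ break the symmetry; nonetheless, each entry $c_{k\ell}:=\sum_{\sigma(\ell)=k}\mathrm{inv}_y(\sigma)$ is a linear form in the $y_{i,j}$, and the coefficient of $y_{i,j}$ in $c_{k\ell}$ counts permutations $\sigma$ with $\sigma(\ell)=k$, $\sigma(i)>\sigma(j)$, $i<j$. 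A direct count gives: if $\ell\notin\{i,j\}$ and $k\notin\{i,j\}$ this is $\binom{n-1}{2}\cdot(n-2)!/\binom{n}{2}$-type expression handled by symmetry; the honest bookkeeping splits into the cases $\ell\in\{i,j\}$ vs not and $k$ large/small. I expect $c_{k\ell}$ to depend on $(k,\ell)$ only through whether $k=\ell$ or $k\ne\ell$ for the "bulk" part, plus a correction linear in the positions that produces the second eigenvalue. Restricting to the trace-zero subspace, the all-ones vector (which carries the "bulk" eigenvalue) is killed, leaving a rank-considerations argument: one eigenvalue of multiplicity $1$ (the "position-dependent" direction, an eigenvector proportional to $(j-i)$-weighted data) and one of multiplicity $n-2$.

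Concretely, the cleanest route: evaluate the action of $\mathsf{i}_y(n)$ on the vector $w$ with $w_k=k-\tfrac{n+1}{2}$ and show $\mathscr{X}(\mathsf{i}_y(n))\,w = -(n-2)!\big(\sum_{i<j}(j-i)y_{i,j}\big)\,w$, which identifies the multiplicity-one eigenvalue $-(n-2)!\sum_{i<j}(j-i)y_{i,j}$ (contributing $(n-2)!\big(nz-\sum(j-i)y_{i,j}\big)$ after adding the $\mathsf{f}_z$ scalar — note $n(n-2)!z-(n-2)!\sum(j-i)y_{i,j}=(n-2)!(nz-\sum(j-i)y_{i,j})$, matching the statement). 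Then show the orthogonal complement of $w$ inside $S^{(n-1,1)}$ is an invariant subspace on which $\mathscr{X}(\mathsf{i}_y(n))$ acts as the zero map, giving eigenvalue $0$ with multiplicity $n-2$, hence eigenvalue $n(n-2)!z$ for $\mathsf{i}_y(n)+\mathsf{f}_z(n)$; diagonalizability follows since we exhibit a full eigenbasis.

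The main obstacle will be the equivariance computation showing that $\mathrm{span}(w)^{\perp}\cap S^{(n-1,1)}$ is genuinely invariant under $\mathscr{X}(\mathsf{i}_y(n))$ and that the operator vanishes there — this is where the linear-in-$y$ coefficients must be pinned down exactly, since a priori the $y_{i,j}$ could couple $w$ to its complement. I would verify this by computing $\langle e_a - e_b,\ \mathscr{X}(\mathsf{i}_y(n))(e_c-e_d)\rangle$ for general indices and checking the resulting bilinear form factors through the rank-one form $w\otimes w$ plus a multiple of the identity on $S^{(n-1,1)}$; equivalently, one shows $\mathscr{X}(\mathsf{i}_y(n)) = \alpha\,ww^{\mathsf{T}} + \beta\, P$ on $S^{(n-1,1)}$ where $P$ is the projection, and then reads off $\beta=0$ from a single well-chosen matrix entry (e.g. the coefficient of $y_{1,2}$ in $c_{33}$ versus $c_{34}$ for $n\ge 4$). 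The cases $n=1,2,3$ are excluded precisely because these coincidences ($\beta=0$, and the two listed eigenvalues being distinct and accounting for all $n-1$ dimensions) require $n\ge 4$.
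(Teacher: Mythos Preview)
Your treatment of $\mathsf{f}_z(n)$ via Schur's lemma and the character identity $\chi^{(n-1,1)}(\sigma)=\mathrm{fix}(\sigma)-1$, together with $\sum_\sigma\mathrm{fix}(\sigma)^2=2\,n!$, is correct and is in fact cleaner than the paper's route: the paper reproves $\mathscr{X}_{S^{(n-1,1)}}(\mathsf{f}_z(n))=n(n-2)!z\,\mathsf{I}_{n-1}$ by a direct entry-by-entry computation in the basis $\{\mathbf{1}-\mathbf{i}\}_{i\ge 2}$ using its Lemmas~2.3 and~2.4, rather than invoking centrality.

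There is, however, a genuine gap in your plan for $\mathsf{i}_y(n)$. The ansatz $\mathscr{X}_{S^{(n-1,1)}}(\mathsf{i}_y(n))=\alpha\,ww^{\mathsf T}+\beta P$ presupposes that the operator is symmetric, and it is not: $\mathrm{inv}_y(\sigma)\ne\mathrm{inv}_y(\sigma^{-1})$ in general (already for $\sigma=(1\,2\,3)$ one gets $y_{1,3}+y_{2,3}$ versus $y_{1,2}+y_{1,3}$), so $\mathsf{i}_y(n)$ is not self-adjoint in the group algebra and there is no reason for its action on $S^{(n-1,1)}$ to be symmetric. What actually happens --- and what the paper establishes by computing the entries $[\mathbf{1}-\mathbf{j}]\,\mathsf{i}_y(n)\,(\mathbf{1}-\mathbf{i})$ explicitly --- is that the matrix factors as a \emph{rank-one} product $\lambda\,\mathsf{x}^{\mathsf T}$ with two \emph{different} vectors: the row vector has entries $\lambda_j=\tfrac{n-2j+1}{2}(n-2)!$, which is indeed (up to scalar) your $w$, but the column vector has entries
\[
\mathsf{x}_i \;=\; 2y_{1,i}+\sum_{m\in[n]\setminus\{1,i\}}y_{1,m}+\sum_{l\in[i-1]\setminus\{1\}}y_{l,i}-\sum_{m\in[n]\setminus[i]}y_{i,m},
\]
polynomials in the $y$'s that are not proportional to $w_i$. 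Consequently your vector $w$ \emph{is} the eigenvector for the nonzero eigenvalue (image of a rank-one map), but the $0$-eigenspace is $\mathsf{x}^{\perp}$, not $w^{\perp}$. Your proposed verification --- matching entries to $\alpha ww^{\mathsf T}+\beta P$ and reading off $\beta=0$ --- would therefore fail, and with it the ``exhibit a full eigenbasis'' argument for diagonalizability.

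The repair is exactly what the paper does: compute the entries directly, observe the factorization $\lambda_j\mathsf{x}_i$, and invoke the elementary fact (Lemma~3.3 in the paper) that a matrix of the form $(\lambda_i\mathsf{x}_j)_{i,j}$ has spectrum $\{\sum_i\lambda_i\mathsf{x}_i,\,0\}$ with multiplicities $1$ and $n-2$; diagonalizability then follows either because the trace $\sum_i\lambda_i\mathsf{x}_i=-(n-2)!\sum_{i<j}(j-i)y_{i,j}$ is nonzero, or (as the paper prefers) by quoting that the regular representation of $\mathsf{i}_y(n)$ is already known to be diagonalizable.
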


\noindent We prove Theorem~\ref{ThSp} in Section~\ref{SeSp}. Then, we combine Theorem~\ref{ThF} and Theorem~\ref{ThSp} to prove Theorem~\ref{ThDIF} in Section~\ref{SeTh}. For $\lambda, \mu \in \mathrm{Par}(n)$, let $\chi_{\lambda}^{\mu}$ be the character associated to the Specht module $S^{\lambda}$ and to the conjugacy class $\mathfrak{S}_n^{\mu}$, and $\mathrm{fix}(\mu) := \mathrm{fix}(\sigma)$ for $\sigma \in \mathfrak{S}_n^{\mu}$. Besides, let $\mathsf{I}_n$ be the identity matrix of size $n$. It is known that \cite[§~2.2]{Ra}
\begin{equation} \label{ch}
\mathscr{X}_{\mathbb{R}[\mathfrak{S}_n]}\Big(\sum_{\sigma \in \mathfrak{S}_n} \mathrm{fix}(\sigma)\sigma\Big) = \bigoplus_{\lambda \in \mathrm{Par}(n)} \frac{\sum_{\mu \in \mathrm{Par}(n)}\mathrm{fix}(\mu) \cdot \#\mathfrak{S}_n^{\mu} \cdot \chi_{\lambda}^{\mu}}{d_{\lambda}} \mathsf{I}_{d_{\lambda}^2}.
\end{equation}

\noindent Although research on symmetric group characters remains very active \cite{Las}, \cite{OrZa}, it is still not possible to obtain Theorem~\ref{ThF} from Equation~\ref{ch} as the majority of the $\chi_{\lambda}^{\mu}$'s are not known. That legitimizes Theorem~\ref{ThF}. However, we deduce from Theorem~\ref{ThF} and Theorem~\ref{ThSp} that
$$\sum_{\mu \in \mathrm{Par}(n)}\mathrm{fix}(\mu) \cdot \#\mathfrak{S}_n^{\mu} \cdot \chi_{\lambda}^{\mu} = \begin{cases}
n! & \text{if}\ \lambda \in \big\{(1, \dots, 1), (n-1,1)\big\}, \\
0 & \text{otherwise}.
\end{cases}$$

\section{Diagonalization of $\mathsf{F}(n)$}  \label{SeF}

\noindent We prove Theorem~\ref{ThF} in this section. Our strategy is to compute the minimal polynomial of $\mathsf{F}(n)$. From it, we are able to deduce its diagonalizability, its spectrum, and to determine the multiplicities.

\begin{lemma}
For $n \geq 4$, $\displaystyle \sum_{\sigma \in \mathfrak{S}_n} \mathrm{fix}_z(\sigma) = n!z$.
\end{lemma}

\begin{proof}
Define $\mathrm{mfix}_z: \mathfrak{S}_n \rightarrow \mathbb{R}[z_1, \dots, z_n]$ by $\displaystyle \mathrm{mfix}_z(\sigma) := \sum_{i \in \mathrm{Fix}\,\sigma} z_i$, and let $$\mathrm{mf}(z_1, \dots, z_n) := \sum_{\sigma \in \mathfrak{S}_n} \mathrm{mfix}_z(\sigma).$$
Since $[z_i]\mathrm{mf}(z_1, \dots, z_n) = \#\big\{\sigma \in \mathfrak{S}_n\ |\ \sigma(i)=i\big\} = (n-1)!$, then $\mathrm{mf}(z_1, \dots, z_n)$ is equal to $\displaystyle (n-1)! \sum_{i \in [n]} z_i$. By $\displaystyle \sum_{\sigma \in \mathfrak{S}_n} \mathrm{fix}_z(\sigma) = \mathrm{mf}(z, \dots, z)$, we get the result.
\end{proof}

\begin{lemma}  \label{LeF}
Let $\sigma, \tau \in \mathfrak{S}_n$. Then, $\mathrm{Fix}(\sigma^{-1} \tau) = \big\{i \in [n]\ \big|\ \sigma(i) = \tau(i)\big\}$.
\end{lemma}

\begin{proof}
If $\sigma(i) = \tau(i)$, then $\sigma^{-1} \tau(i) = \sigma^{-1} \sigma(i) = i$. Otherwise, $\sigma^{-1} \tau(i) \neq \sigma^{-1} \sigma(i) = i$.
\end{proof}

\begin{lemma}  \label{Leij}
Let $n \geq 4$, and $i,j \in [n]$ with $i \neq j$. Then,
\begin{enumerate}
\item $\displaystyle \sum_{\substack{\sigma \in \mathfrak{S}_n \\ \sigma(i)=i,\, \sigma(j)=j}} \mathrm{fix}(\sigma) = 3(n-2)!$,
\item $\displaystyle \sum_{\substack{\sigma \in \mathfrak{S}_n \\ \sigma(i)=i,\, \sigma(j) \neq j}} \mathrm{fix}(\sigma) = (2n-5)(n-2)!$,
\item $\displaystyle \sum_{\substack{\sigma \in \mathfrak{S}_n \\ \sigma(i)=j,\, \sigma(j)=i}} \mathrm{fix}(\sigma) = (n-2)!$,
\item $\displaystyle \sum_{\substack{\sigma \in \mathfrak{S}_n \\ \sigma(i)=j,\, \sigma(j) \neq i}} \mathrm{fix}(\sigma) = (n-3)(n-2)!$.
\end{enumerate}
\end{lemma}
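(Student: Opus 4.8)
The plan is to linearise the statistic $\mathrm{fix}$ and swap the order of summation. Since $\mathrm{fix}(\sigma) = \#\{k \in [n]\ |\ \sigma(k)=k\}$, each of the four sums takes the shape
$$\sum_{\sigma} \mathrm{fix}(\sigma) = \sum_{\sigma} \#\big\{k \in [n]\ \big|\ \sigma(k)=k\big\} = \sum_{k \in [n]} \#\big\{\sigma \in \mathfrak{S}_n\ \big|\ (\ast)\ \text{and}\ \sigma(k)=k\big\},$$
where $(\ast)$ denotes the defining condition on $\sigma(i)$ and $\sigma(j)$ and the outer sum on the left runs over the permutations satisfying it. The only inputs needed are the elementary counts that prescribing the images of $m$ distinct points of $[n]$ (to $m$ pairwise distinct values) is realised by exactly $(n-m)!$ permutations, and that a condition of the form $\sigma(\ell) \neq c$ is handled by subtracting the count with $\sigma(\ell)=c$ from the unconstrained one.

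For part~(1), split the sum over $k$ according to whether $k \in \{i,j\}$ or $k \notin \{i,j\}$. For $k \in \{i,j\}$ the requirement $\sigma(k)=k$ is already forced by $(\ast)$, so each of these two terms equals $\#\{\sigma\ |\ \sigma(i)=i,\ \sigma(j)=j\} = (n-2)!$; for each of the $n-2$ values $k \notin \{i,j\}$ the term equals $(n-3)!$. This gives $2(n-2)! + (n-2)(n-3)! = 3(n-2)!$. Part~(3) is the same bookkeeping: now both $k=i$ and $k=j$ contradict $(\ast)$ (using $i \neq j$), so only the $n-2$ terms with $k \notin \{i,j\}$ survive, each equal to $(n-3)!$, totalling $(n-2)(n-3)! = (n-2)!$.

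For part~(2), the term $k=j$ vanishes because $\sigma(j)=j$ contradicts $(\ast)$; the term $k=i$ equals $\#\{\sigma\ |\ \sigma(i)=i,\ \sigma(j)\neq j\} = (n-1)! - (n-2)! = (n-2)(n-2)!$; and each of the $n-2$ terms with $k \notin \{i,j\}$ equals $(n-2)! - (n-3)! = (n-3)(n-3)!$, so the total is $(n-2)(n-2)! + (n-2)(n-3)(n-3)! = (2n-5)(n-2)!$. Part~(4) is again parallel: $k=i$ is impossible since $\sigma(i)=j\neq i$, $k=j$ is impossible by injectivity together with $\sigma(i)=j$, and each of the remaining $n-2$ terms equals $(n-2)! - (n-3)! = (n-3)(n-3)!$, giving $(n-2)(n-3)(n-3)! = (n-3)(n-2)!$. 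There is no genuine obstacle here; the only point requiring care is keeping track of which values of $k$ are incompatible with the constraint $(\ast)$, and the hypothesis $n \geq 4$ merely guarantees that all factorials appearing have non-negative argument and that an index $k \notin \{i,j\}$ is available.
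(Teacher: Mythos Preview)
Your proof is correct, and it takes a genuinely different route from the paper's. The paper argues case by case: it uses the $\mathfrak{S}_n$-symmetry to fix $i,j$ to convenient values, strips off the prescribed coordinates to reduce to a sum over a smaller $\mathfrak{S}_{n-1}$ or $\mathfrak{S}_{n-2}$, and then invokes the identity $\sum_{\sigma \in \mathfrak{S}_m}\mathrm{fix}(\sigma)=m!$; for part~(4) it even introduces an auxiliary quantity $\mathsf{x}$ and solves for it from a linear relation. Your linearisation $\mathrm{fix}(\sigma)=\sum_{k}\mathbf{1}[\sigma(k)=k]$ followed by interchanging the two sums is more uniform: all four parts become a single bookkeeping exercise in counting permutations with two or three prescribed values, with no recursion to smaller symmetric groups and no auxiliary unknowns. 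The paper's approach has the mild advantage that it naturally produces the intermediate identities (such as $\sum_{\sigma(n)=n}\mathrm{fix}(\sigma)=2(n-1)!$) that are reused later, whereas your method trades those by-products for a shorter and more transparent argument. One small remark: your closing comment that $n\geq 4$ is needed is slightly generous; your counts only ever involve $(n-3)!$, so $n\geq 3$ already suffices for the argument as written.
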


\begin{proof}
1. $\displaystyle \sum_{\substack{\sigma \in \mathfrak{S}_n \\ \sigma(i)=i,\, \sigma(j)=j}} \mathrm{fix}(\sigma) = \sum_{\substack{\sigma \in \mathfrak{S}_n \\ \sigma(n-1)=n-1,\, \sigma(n)=n}} \mathrm{fix}(\sigma) = 2(n-2)! + \sum_{\sigma \in \mathfrak{S}_{n-2}} \mathrm{fix}(\sigma) = 3(n-2)!$.

\noindent 2.
\begin{align*}
\sum_{\substack{\sigma \in \mathfrak{S}_n \\ \sigma(i)=i,\, \sigma(j) \neq j}} \mathrm{fix}(\sigma) & = \sum_{\substack{\sigma \in \mathfrak{S}_n \\ \sigma(n-1) \neq n-1,\, \sigma(n)=n}} \mathrm{fix}(\sigma) \\
& = (n-2)(n-2)! + \sum_{\substack{\sigma \in \mathfrak{S}_{n-1} \\ \sigma(n-1) \neq n-1}} \mathrm{fix}(\sigma) \\
& = (n-2)(n-2)! + (n-1)! - \sum_{\substack{\sigma \in \mathfrak{S}_{n-1} \\ \sigma(n-1) = n-1}} \mathrm{fix}(\sigma) \\
& = (2n-3)(n-2)! - (n-2)! - \sum_{\sigma \in \mathfrak{S}_{n-2}} \mathrm{fix}(\sigma) \\
& = (2n-5)(n-2)!.
\end{align*}

\noindent 3. $\displaystyle \sum_{\substack{\sigma \in \mathfrak{S}_n \\ \sigma(i)=j,\, \sigma(j)=i}} \mathrm{fix}(\sigma) = \sum_{\substack{\sigma \in \mathfrak{S}_n \\ \sigma(n-1)=n,\, \sigma(n)=n-1}} \mathrm{fix}(\sigma) = \sum_{\sigma \in \mathfrak{S}_{n-2}} \mathrm{fix}(\sigma) = (n-2)!$.

\noindent 4. For $i \in [n-2]$, we have $\displaystyle \sum_{\substack{\sigma \in \mathfrak{S}_n \\ \sigma(n)=i}} \mathrm{fix}(\sigma) = \sum_{\substack{\sigma \in \mathfrak{S}_n \\ \sigma(n)=n-1}} \mathrm{fix}(\sigma) = \mathsf{x}$. Then,
$$\sum_{\sigma \in \mathfrak{S}_n} \mathrm{fix}(\sigma) = \sum_{\substack{\sigma \in \mathfrak{S}_n \\ \sigma(n)=n}} \mathrm{fix}(\sigma) + (n-1) \mathsf{x} \quad \text{with} \quad \sum_{\substack{\sigma \in \mathfrak{S}_n \\ \sigma(n)=n}} \mathrm{fix}(\sigma) = 2(n-1)!,$$ 
which gives $\displaystyle \mathsf{x} = (n-2)(n-2)!$. Hence,
$$\sum_{\substack{\sigma \in \mathfrak{S}_n \\ \sigma(i)=j,\, \sigma(j) \neq i}} \mathrm{fix}(\sigma) = \sum_{\substack{\sigma \in \mathfrak{S}_n \\ \sigma(n-1) \neq n,\, \sigma(n)=n-1}} \mathrm{fix}(\sigma) = \mathsf{x} - \sum_{\substack{\sigma \in \mathfrak{S}_n \\ \sigma(n-1)=n,\, \sigma(n)=n-1}} \mathrm{fix}(\sigma) = (n-3)(n-2)!.$$
\end{proof}

\begin{lemma}  \label{Leijk}
Let $n \geq 4$, and $i,j,k \in [n]$ with $i \neq j$, $j \neq k$, $k \neq i$. Then,
\begin{enumerate}
	\item $\displaystyle \sum_{\substack{\sigma \in \mathfrak{S}_n \\ \sigma(i)=k,\, \sigma(j)=j}} \mathrm{fix}(\sigma) = (2n-5)(n-3)!$,
	\item $\displaystyle \sum_{\substack{\sigma \in \mathfrak{S}_n \\ \sigma(i)=k,\, \sigma(j) \neq j}} \mathrm{fix}(\sigma) = (n^2-6n+9)(n-3)!$,
	\item $\displaystyle \sum_{\substack{\sigma \in \mathfrak{S}_n \\ \sigma(i) \neq k,\, \sigma(j) = j}} \mathrm{fix}(\sigma) = (2n^2-8n+9)(n-3)!$,
	\item $\displaystyle \sum_{\substack{\sigma \in \mathfrak{S}_n \\ \sigma(i)=j,\, \sigma(j)=k}} \mathrm{fix}(\sigma) = (n-3)(n-3)!$,
	\item $\displaystyle \sum_{\substack{\sigma \in \mathfrak{S}_n \\ \sigma(i)=j,\, \sigma(j) \neq k}} \mathrm{fix}(\sigma) = (n^2-5n+7)(n-3)!$,
	\item $\displaystyle \sum_{\substack{\sigma \in \mathfrak{S}_n \\ \sigma(i) \neq j,\, \sigma(j) = k}} \mathrm{fix}(\sigma) = (n^2-5n+7)(n-3)!$.	
\end{enumerate}
\end{lemma}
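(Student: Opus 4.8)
The plan is to follow the pattern of the proof of Lemma~\ref{Leij}, relying on two elementary facts. First, $\mathrm{fix}$ is a class function and, for any $\pi \in \mathfrak{S}_n$, one has $(\pi\sigma\pi^{-1})(a)=b$ if and only if $\sigma(\pi^{-1}a)=\pi^{-1}b$; hence relabelling the domain and codomain of $\sigma$ simultaneously by $\pi$ preserves $\mathrm{fix}(\sigma)$ and carries a constraint of the form $\sigma(a)=b$, or $\sigma(a)\neq b$, to one of the same form. I would use this to reduce each of the six sums to the case where $i,j,k$ is a convenient triple of pairwise distinct elements, chosen inside $\{n-2,n-1,n\}$. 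Second, I would repeatedly invoke $\sum_{\sigma\in\mathfrak{S}_m}\mathrm{fix}(\sigma)=m!$ and $\sum_{\sigma\in\mathfrak{S}_m,\ \sigma(a)=a}\mathrm{fix}(\sigma)=2(m-1)!$, both obtained by the double counting used in the first lemma of this section.

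The engine of the computation is the identity
$$\sum_{\sigma\ \text{meeting}\ C}\mathrm{fix}(\sigma)\;=\;\sum_{a\in[n]}\#\big\{\sigma\in\mathfrak{S}_n\;\big|\;\sigma(a)=a\ \text{and}\ \sigma\ \text{meets}\ C\big\},$$
valid for any family $C$ of constraints of type $\sigma(a)=b$. If $C$ contradicts $\sigma(a)=a$ — because it prescribes $\sigma(a)$ to a value $\neq a$, or sends another index to $a$ — the $a$-term is $0$; otherwise it is the number of permutations extending the consistent family $C\cup\{\sigma(a)=a\}$, a factorial one reads off at once. I would compute items~1 and~4 directly. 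For item~1, taking $(i,j,k)=(n-1,n-2,n)$, the prescriptions $\sigma(n-1)=n$ and $\sigma(n-2)=n-2$ leave $n-2$ as the only index among $\{n-2,n-1,n\}$ that can be fixed, so the sum is $(n-2)!+(n-3)(n-3)!=(2n-5)(n-3)!$. For item~4, taking $(i,j,k)=(n-2,n-1,n)$, the prescriptions $\sigma(n-2)=n-1$ and $\sigma(n-1)=n$ leave no index among $\{n-2,n-1,n\}$ fixable, so the sum is $(n-3)(n-3)!$.

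The remaining four items I would obtain by partitioning on the negated constraint: the sums $(1)+(2)$, $(1)+(3)$, $(4)+(5)$ and $(4)+(6)$ equal respectively $\sum_{\sigma(i)=k}\mathrm{fix}(\sigma)$, $\sum_{\sigma(j)=j}\mathrm{fix}(\sigma)$, $\sum_{\sigma(i)=j}\mathrm{fix}(\sigma)$ and $\sum_{\sigma(j)=k}\mathrm{fix}(\sigma)$ (all over $\sigma\in\mathfrak{S}_n$). Each of these one-position sums is known: $\sum_{\sigma(a)=a}\mathrm{fix}(\sigma)=2(n-1)!$, while for $a\neq b$ the engine — or items~3 and~4 of Lemma~\ref{Leij} — gives $\sum_{\sigma(a)=b}\mathrm{fix}(\sigma)=(n-2)!+(n-3)(n-2)!=(n-2)(n-2)!$. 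Subtracting the values of items~1 and~4 already found then yields items~2,~3,~5 and~6; in particular items~5 and~6 turn out equal because each is the difference of the same two quantities.

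I expect the main obstacle to be bookkeeping rather than any genuine difficulty: one must carefully track which of the indices $n-2,n-1,n$ can still be a fixed point once some images are prescribed (an index cannot be fixed once its value has been used as the image of a different index), and one must be sure the relabelling reduction is applied only to constraints that are genuinely equivariant under simultaneous relabelling of domain and codomain. Everything else reduces to elementary factorial arithmetic.
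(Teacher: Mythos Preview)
Your argument is correct. It overlaps with the paper's proof in spirit---both exploit conjugacy-invariance of $\mathrm{fix}$ to relabel to convenient indices, and both obtain items~5 and~6 exactly as you do, by subtracting item~4 from the one-constraint sum $\sum_{\sigma(a)=b}\mathrm{fix}(\sigma)=(n-2)(n-2)!$---but the organisation differs. The paper computes item~1 (and item~3) by fixing $\sigma(n)=n$ and reducing to a sum over $\mathfrak{S}_{n-1}$, then invoking the quantity $\mathsf{x}$ from the proof of Lemma~\ref{Leij}; it obtains item~2 (and item~4) not by complementing item~1 but by a separate symmetry argument, noting that $\sum_{\sigma(i)=l,\ \sigma(j)\neq j}\mathrm{fix}(\sigma)$ is the same for every $l\in[n]\setminus\{i,j\}$ and solving for this common value. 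Your ``engine'' identity---interchanging the sum over $\sigma$ with the sum over potential fixed points $a$---makes items~1 and~4 a direct tally of factorials, and then items~2 and~3 drop out by a single subtraction; this is a bit more uniform and avoids the recursion to $\mathfrak{S}_{n-1}$. The paper's route, on the other hand, recycles computations already done inside Lemma~\ref{Leij} and so threads the two lemmas together more tightly.
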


\begin{proof}
1. We have $\displaystyle \sum_{\substack{\sigma \in \mathfrak{S}_n \\ \sigma(i)=k,\, \sigma(j)=j}} \mathrm{fix}(\sigma) = \sum_{\substack{\sigma \in \mathfrak{S}_n \\ \sigma(n-1)=n-2,\, \sigma(n)=n}} \mathrm{fix}(\sigma) = (n-2)! + \sum_{\substack{\sigma \in \mathfrak{S}_{n-1} \\ \sigma(n-1)=n-2}} \mathrm{fix}(\sigma)$.
From $\mathsf{x}$ in the proof 4. of Lemma~\ref{Leij}, we get $\displaystyle \sum_{\substack{\sigma \in \mathfrak{S}_{n-1} \\ \sigma(n-1)=n-2}} \mathrm{fix}(\sigma) = (n-3)(n-3)!$. Hence, $\displaystyle \sum_{\substack{\sigma \in \mathfrak{S}_n \\ \sigma(i)=k,\, \sigma(j)=j}} \mathrm{fix}(\sigma) = (n-2)! + (n-3)(n-3)! = (2n-5)(n-3)!$.

\noindent 2. On one side, $\displaystyle \sum_{\substack{\sigma \in \mathfrak{S}_n \\ \sigma(j) \neq j}} \mathrm{fix}(\sigma) = \sum_{l \in [n] \setminus \{j\}} \sum_{\substack{\sigma \in \mathfrak{S}_n \\ \sigma(j)=l}} \mathrm{fix}(\sigma) = (n-2)(n-1)!$. For $l,m \in [n] \setminus \{i,j\}$, we have $\displaystyle \sum_{\substack{\sigma \in \mathfrak{S}_n \\ \sigma(i)=l,\, \sigma(j) \neq j}} \mathrm{fix}(\sigma) = \sum_{\substack{\sigma \in \mathfrak{S}_n \\ \sigma(i)=m,\, \sigma(j) \neq j}} \mathrm{fix}(\sigma) = \mathsf{y}$. On the other side, $$\sum_{\substack{\sigma \in \mathfrak{S}_n \\ \sigma(j) \neq j}} \mathrm{fix}(\sigma) = \sum_{\substack{\sigma \in \mathfrak{S}_n \\ \sigma(i)=i,\, \sigma(j) \neq j}} \mathrm{fix}(\sigma) + \sum_{\substack{\sigma \in \mathfrak{S}_n \\ \sigma(i)=j}} \mathrm{fix}(\sigma) + (n-2)\mathsf{y}.$$
Hence, $\mathsf{y} = (n^2-6n+9)(n-3)!$.

\noindent 3. First $\displaystyle \sum_{\substack{\sigma \in \mathfrak{S}_n \\ \sigma(i) \neq k,\, \sigma(j) = j}} \mathrm{fix}(\sigma) = \sum_{\substack{\sigma \in \mathfrak{S}_n \\ \sigma(n-1) \neq n-2,\, \sigma(n) = n}} \mathrm{fix}(\sigma) = \sum_{\substack{\sigma \in \mathfrak{S}_{n-1} \\ \sigma(n-1) \neq n-2}} \mathrm{fix}(\sigma) + (n-2)(n-2)!$.\\ As $\displaystyle \sum_{\substack{\sigma \in \mathfrak{S}_{n-1} \\ \sigma(n-1) \neq n-2}} \mathrm{fix}(\sigma) = (n-1)! - \sum_{\substack{\sigma \in \mathfrak{S}_{n-1} \\ \sigma(n-1)=n-2}} \mathrm{fix}(\sigma) = (n^2-4n+6)(n-3)!$, therefore\\ $\displaystyle \sum_{\substack{\sigma \in \mathfrak{S}_n \\ \sigma(i) \neq k,\, \sigma(j) = j}} \mathrm{fix}(\sigma) = (2n^2-8n+9)(n-3)!$.

\noindent 4. For $l,m \in [n] \setminus \{i,j\}$, we have $\displaystyle \sum_{\substack{\sigma \in \mathfrak{S}_n \\ \sigma(i)=j,\, \sigma(j)=l}} \mathrm{fix}(\sigma) = \sum_{\substack{\sigma \in \mathfrak{S}_n \\ \sigma(i)=j,\, \sigma(j)=m}} \mathrm{fix}(\sigma) = \mathsf{z}$. Since
$$\sum_{\substack{\sigma \in \mathfrak{S}_n \\ \sigma(i)=j}} \mathrm{fix}(\sigma) = \sum_{\substack{\sigma \in \mathfrak{S}_n \\ \sigma(i)=j,\, \sigma(j)=i}} \mathrm{fix}(\sigma) + (n-2)\mathsf{z},$$
we get $\mathsf{z} = (n-3)(n-3)!$.

\noindent 5. As $\displaystyle \sum_{\substack{\sigma \in \mathfrak{S}_n \\ \sigma(i)=j}} \mathrm{fix}(\sigma) = \sum_{\substack{\sigma \in \mathfrak{S}_n \\ \sigma(i)=j,\, \sigma(j) \neq k}} \mathrm{fix}(\sigma) + \sum_{\substack{\sigma \in \mathfrak{S}_n \\ \sigma(i)=j,\, \sigma(j) = k}} \mathrm{fix}(\sigma)$, we deduce that
$$\sum_{\substack{\sigma \in \mathfrak{S}_n \\ \sigma(i)=j,\, \sigma(j) \neq k}} \mathrm{fix}(\sigma) = (n^2-5n+7)(n-3)!.$$

\noindent 6. As $\displaystyle \sum_{\substack{\sigma \in \mathfrak{S}_n \\ \sigma(j)=k}} \mathrm{fix}(\sigma) = \sum_{\substack{\sigma \in \mathfrak{S}_n \\ \sigma(i)=j,\, \sigma(j)=k}} \mathrm{fix}(\sigma) + \sum_{\substack{\sigma \in \mathfrak{S}_n \\ \sigma(i) \neq j,\, \sigma(j) = k}} \mathrm{fix}(\sigma)$, we deduce that
$$\sum_{\substack{\sigma \in \mathfrak{S}_n \\ \sigma(i) \neq j,\, \sigma(j)=k}} \mathrm{fix}(\sigma) = (n^2-5n+7)(n-3)!.$$
\end{proof}

\noindent Denote by $\iota$ the identity permutation of $\mathfrak{S}_n$.

\begin{proposition}  \label{Prfix}
For $n \geq 4$, and $\tau \in \mathfrak{S}_n$, we have
$$\sum_{\sigma \in \mathfrak{S}_n} \mathrm{fix}_z(\sigma) \mathrm{fix}_z(\sigma^{-1} \tau) = \sum_{\sigma \in \mathfrak{S}_n} \mathrm{fix}_z(\sigma) \mathrm{fix}_z(\sigma^{-1}) - n(n-2)! \big(\mathrm{fix}_z(\iota) - \mathrm{fix}_z(\tau)\big).$$
\end{proposition}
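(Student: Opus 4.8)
The plan is to strip off the variable $z$ and turn the identity into a double-counting statement about fixed points. Since $\mathrm{fix}_z(\rho) = \#\mathrm{Fix}(\rho)\,z$ for every $\rho \in \mathfrak{S}_n$ and, by Lemma~\ref{LeF}, $\#\mathrm{Fix}(\sigma^{-1}\tau) = \#\{i \in [n] \mid \sigma(i)=\tau(i)\}$, the left-hand side equals
$$z^2 \sum_{\sigma \in \mathfrak{S}_n} \#\mathrm{Fix}(\sigma)\cdot\#\{i \in [n] \mid \sigma(i)=\tau(i)\} \;=\; z^2 \sum_{i \in [n]}\ \sum_{\substack{\sigma \in \mathfrak{S}_n \\ \sigma(i)=\tau(i)}} \#\mathrm{Fix}(\sigma),$$
where I have written $\#\{i \mid \sigma(i)=\tau(i)\} = \sum_{i}[\sigma(i)=\tau(i)]$ and exchanged the two summations. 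So everything reduces to evaluating, for each position $i$, the inner sum $\sum_{\sigma(i)=\tau(i)}\#\mathrm{Fix}(\sigma)$.

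First I would split this inner sum according to whether $\tau$ fixes $i$. If $\tau(i)=i$, then imposing $\sigma(i)=i$ and decomposing $\#\mathrm{Fix}(\sigma)=1+\#\mathrm{Fix}\big(\sigma|_{[n]\setminus\{i\}}\big)$ gives $\sum_{\sigma(i)=i}\#\mathrm{Fix}(\sigma)=(n-1)!+\sum_{\rho\in\mathfrak{S}_{n-1}}\#\mathrm{Fix}(\rho)=2(n-1)!$, using the elementary identity $\sum_{\rho\in\mathfrak{S}_m}\#\mathrm{Fix}(\rho)=m!$ (the content of the first lemma of this section). If $\tau(i)\neq i$, then $i$ is never fixed by such a $\sigma$, so $\#\mathrm{Fix}(\sigma)=\sum_{k\neq i}[\sigma(k)=k]$; summing over $\sigma$ with $\sigma(i)=\tau(i)$, the term $k=\tau(i)$ contributes $0$ (it would force $\sigma(i)=\sigma(k)$), and each of the remaining $n-2$ values of $k$ contributes $(n-2)!$, whence $\sum_{\sigma(i)=\tau(i)}\#\mathrm{Fix}(\sigma)=(n-2)(n-2)!$ — this is precisely the quantity $\mathsf{x}$ from the proof of Lemma~\ref{Leij}(4).

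Summing over $i \in [n]$ and writing $f:=\#\mathrm{Fix}(\tau)$ then yields
$$\sum_{\sigma\in\mathfrak{S}_n}\mathrm{fix}_z(\sigma)\,\mathrm{fix}_z(\sigma^{-1}\tau) = z^2\big(2f(n-1)!+(n-f)(n-2)(n-2)!\big) = n(n-2)!\,(f+n-2)\,z^2.$$
Taking $\tau=\iota$ (so $f=n$) specializes this to $\sum_\sigma \mathrm{fix}_z(\sigma)\,\mathrm{fix}_z(\sigma^{-1})=2\,n!\,z^2$; subtracting and using $2\,n! = n(n-2)!\,(2n-2)$ together with $\#\mathrm{Fix}(\iota)-\#\mathrm{Fix}(\tau)=n-f$, the difference collapses to $-\,n(n-2)!\,(n-f)\,z^2 = -\,n(n-2)!\,z\big(\mathrm{fix}_z(\iota)-\mathrm{fix}_z(\tau)\big)$, which rearranges into the asserted identity. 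I do not expect a genuine obstacle: the one spot needing care is the case $\tau(i)\neq i$, where one must notice both that $\sigma$ cannot fix $i$ and that the ``collision'' term $k=\tau(i)$ drops out; after that only the factorial manipulations $(n-1)!=(n-1)(n-2)!$ and $2\,n!=2n(n-1)(n-2)!$ remain, and those are routine.
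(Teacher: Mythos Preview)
Your argument is correct and is considerably more direct than the paper's. The paper proceeds by a telescoping comparison: for a $\tau$ and a suitable transposition $(i\,j)$ it computes the difference $\sum_{\sigma}\mathrm{fix}(\sigma)\big(\mathrm{fix}(\sigma^{-1}\tau)-\mathrm{fix}(\sigma^{-1}\tau(i\,j))\big)$ by a case analysis on the pair $(\sigma(i),\sigma(j))$, invoking all of Lemma~\ref{Leij} in one regime and all of Lemma~\ref{Leijk} in another, and then writes an arbitrary $\tau$ as a product of such transpositions to telescope back to $\iota$. You instead swap the order of summation after expanding $\#\mathrm{Fix}(\sigma^{-1}\tau)=\sum_i[\sigma(i)=\tau(i)]$, reducing the whole problem to evaluating $\sum_{\sigma(i)=a}\#\mathrm{Fix}(\sigma)$ for $a=i$ and $a\neq i$; these two numbers are $2(n-1)!$ and $(n-2)(n-2)!$, the latter being exactly the $\mathsf{x}$ already computed in the proof of Lemma~\ref{Leij}(4). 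This bypasses Lemma~\ref{Leijk} entirely, avoids the transposition decomposition, and in fact yields the closed form $\sum_\sigma \mathrm{fix}(\sigma)\,\mathrm{fix}(\sigma^{-1}\tau)=n(n-2)!\big(\#\mathrm{Fix}(\tau)+n-2\big)$, from which the proposition (and the value $\sum_\sigma\mathrm{fix}(\sigma)^2=2\,n!$) follow immediately. The paper's route has the flavor of a recursion that might adapt to other statistics, but for this particular identity your double-counting is strictly shorter.
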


\begin{proof}
Consider first $\tau \in \mathfrak{S}_n$ such that $i,j \in \mathrm{Fix}\,\tau$ with $i \neq j$. From Lemma~\ref{LeF}, it is clear that $$\displaystyle \sum_{\substack{\sigma \in \mathfrak{S}_n \\ \sigma(i), \sigma(j) \notin \{i,j\}}} \mathrm{fix}(\sigma) \mathrm{fix}(\sigma^{-1} \tau) = \sum_{\substack{\sigma \in \mathfrak{S}_n \\ \sigma(i), \sigma(j) \notin \{i,j\}}} \mathrm{fix}(\sigma) \mathrm{fix}\big(\sigma^{-1} \tau (i\,j)\big).$$

\noindent If $\sigma(i)=i$ and $\sigma(j)=j$, then $\mathrm{fix}(\sigma^{-1} \tau) = \mathrm{fix}\big(\sigma^{-1} \tau (i\,j)\big) + 2$, and
$$\sum_{\substack{\sigma \in \mathfrak{S}_n \\ \sigma(i)=i,\, \sigma(j)=j}} \Big( \mathrm{fix}(\sigma) \mathrm{fix}(\sigma^{-1} \tau) - \mathrm{fix}(\sigma) \mathrm{fix}\big(\sigma^{-1} \tau (i\,j)\big) \Big) = 2 \sum_{\substack{\sigma \in \mathfrak{S}_n \\ \sigma(i)=i,\, \sigma(j)=j}} \mathrm{fix}(\sigma) = 6(n-2)!.$$

\noindent Similarly, using Lemma~\ref{Leij}, we obtain the following three equalities:

\begin{align*}
& \sum_{\substack{\sigma \in \mathfrak{S}_n \\ \sigma(i)=i,\, \sigma(j) \neq j}} \Big( \mathrm{fix}(\sigma) \mathrm{fix}(\sigma^{-1} \tau) - \mathrm{fix}(\sigma) \mathrm{fix}\big(\sigma^{-1} \tau (i\,j)\big) \Big) = (2n-5)(n-2)!, \\
& \sum_{\substack{\sigma \in \mathfrak{S}_n \\ \sigma(i)=j,\, \sigma(j) \neq i}} \Big( \mathrm{fix}(\sigma) \mathrm{fix}(\sigma^{-1} \tau) - \mathrm{fix}(\sigma) \mathrm{fix}\big(\sigma^{-1} \tau (i\,j)\big) \Big) = (3-n)(n-2)!, \\
& \sum_{\substack{\sigma \in \mathfrak{S}_n \\ \sigma(i)=j,\, \sigma(j)=i}} \Big( \mathrm{fix}(\sigma) \mathrm{fix}(\sigma^{-1} \tau) - \mathrm{fix}(\sigma) \mathrm{fix}\big(\sigma^{-1} \tau (i\,j)\big) \Big) = -2(n-2)!.
\end{align*}

\noindent After adding the five previous equations, we get at the end
\begin{equation}  \label{Eqij}
\sum_{\sigma \in \mathfrak{S}_n} \mathrm{fix}(\sigma) \mathrm{fix}\big(\sigma^{-1} \tau (i\,j)\big) = \sum_{\sigma \in \mathfrak{S}_n} \mathrm{fix}(\sigma) \mathrm{fix}(\sigma^{-1} \tau) - n(n-2)!\Big(\mathrm{fix}(\tau) - \mathrm{fix}\big(\tau (i\,j)\big)\Big).
\end{equation}

\noindent Consider now $\tau \in \mathfrak{S}_n$ such that $i_k \notin \mathrm{Fix}\,\tau$, $j \in \mathrm{Fix}\,\tau$, and $(i_1 \, \dots \, i_k)$ is a cycle of $\tau$. We have $$\displaystyle \sum_{\substack{\sigma \in \mathfrak{S}_n \\ \sigma(i_k), \sigma(j) \notin \{i_1,j\}}} \mathrm{fix}(\sigma) \mathrm{fix}(\sigma^{-1} \tau) = \sum_{\substack{\sigma \in \mathfrak{S}_n \\ \sigma(i_k), \sigma(j) \notin \{i_1,j\}}} \mathrm{fix}(\sigma) \mathrm{fix}\big(\sigma^{-1} \tau (i_k\,j)\big).$$

\noindent If $\sigma(i_k)=i_1$ and $\sigma(j)=j$, then $\mathrm{fix}(\sigma^{-1} \tau) = \mathrm{fix}\big(\sigma^{-1} \tau (i_k\,j)\big) + 2$, and
$$\sum_{\substack{\sigma \in \mathfrak{S}_n \\ \sigma(i_k)=i_1,\, \sigma(j)=j}} \Big( \mathrm{fix}(\sigma) \mathrm{fix}(\sigma^{-1} \tau) - \mathrm{fix}(\sigma) \mathrm{fix}\big(\sigma^{-1} \tau (i_k\,j)\big) \Big) = 2 \sum_{\substack{\sigma \in \mathfrak{S}_n \\ \sigma(i_k)=i_1,\, \sigma(j)=j}} \mathrm{fix}(\sigma) = 2(2n-5)(n-3)!.$$

\noindent Similarly, using Lemma~\ref{Leijk}, we obtain the following five equalities:

\begin{align*}
& \sum_{\substack{\sigma \in \mathfrak{S}_n \\ \sigma(i_k)=i_1,\, \sigma(j) \neq j}} \Big( \mathrm{fix}(\sigma) \mathrm{fix}(\sigma^{-1} \tau) - \mathrm{fix}(\sigma) \mathrm{fix}\big(\sigma^{-1} \tau (i_k\,j)\big) \Big) = (n^2-6n+9)(n-3)!, \\
& \sum_{\substack{\sigma \in \mathfrak{S}_n \\ \sigma(i_k) \neq i_1,\, \sigma(j)=j}} \Big( \mathrm{fix}(\sigma) \mathrm{fix}(\sigma^{-1} \tau) - \mathrm{fix}(\sigma) \mathrm{fix}\big(\sigma^{-1} \tau (i_k\,j)\big) \Big) = (2n^2-8n+9)(n-3)!, \\
& \sum_{\substack{\sigma \in \mathfrak{S}_n \\ \sigma(i_k)=j,\, \sigma(j) \neq i_1}} \Big( \mathrm{fix}(\sigma) \mathrm{fix}(\sigma^{-1} \tau) - \mathrm{fix}(\sigma) \mathrm{fix}\big(\sigma^{-1} \tau (i_k\,j)\big) \Big) = -(n^2-5n+7)(n-3)!, \\
& \sum_{\substack{\sigma \in \mathfrak{S}_n \\ \sigma(i_k) \neq j,\, \sigma(j)=i_1}} \Big( \mathrm{fix}(\sigma) \mathrm{fix}(\sigma^{-1} \tau) - \mathrm{fix}(\sigma) \mathrm{fix}\big(\sigma^{-1} \tau (i_k\,j)\big) \Big) = -(n^2-5n+7)(n-3)!, \\
& \sum_{\substack{\sigma \in \mathfrak{S}_n \\ \sigma(i_k)=j,\, \sigma(j)=i_1}} \Big( \mathrm{fix}(\sigma) \mathrm{fix}(\sigma^{-1} \tau) - \mathrm{fix}(\sigma) \mathrm{fix}\big(\sigma^{-1} \tau (i_k\,j)\big) \Big) = -2(n-3)(n-3)!.
\end{align*}

\noindent After adding the seven previous equations, we get at the end
\begin{equation}  \label{Eqijk}
\sum_{\sigma \in \mathfrak{S}_n} \mathrm{fix}(\sigma) \mathrm{fix}\big(\sigma^{-1} \tau (i\,j)\big) = \sum_{\sigma \in \mathfrak{S}_n} \mathrm{fix}(\sigma) \mathrm{fix}(\sigma^{-1} \tau) - n(n-2)!\Big(\mathrm{fix}(\tau) - \mathrm{fix}\big(\tau (i_k\,j)\big)\Big).
\end{equation}

\noindent For every $\tau \in \mathfrak{S}_n \setminus \{\iota\}$, there exists a sequence $(i_1\,j_1), \dots, (i_k\,j_k)$ of transpositions such that $\displaystyle \tau = \prod_{l \in [k]}^{\rightarrow} (i_l\,j_l)$. Assume that $\tau_0 = \iota$, and for $m \in [k]$, set $\displaystyle \tau_m = \prod_{l \in [m]}^{\rightarrow} (i_l\,j_l)$. Those transpositions can be chosen so that, for every $l \in [k]$, either $i_l, j_l \in \mathrm{Fix}\,\tau_{l-1}$ or $i_l \notin \mathrm{Fix}\,\tau_{l-1}$, $j_l \in \mathrm{Fix}\,\tau_{l-1}$.

\noindent Using Equation~\ref{Eqij} and Equation~\ref{Eqijk}, we get
\begin{align*}
\sum_{\sigma \in \mathfrak{S}_n} \big( \mathrm{fix}(\sigma) \mathrm{fix}(\sigma^{-1} \tau) - \mathrm{fix}(\sigma) \mathrm{fix}(\sigma^{-1}) \big) & = \sum_{l \in [k]} \sum_{\sigma \in \mathfrak{S}_n} \big( \mathrm{fix}(\sigma) \mathrm{fix}(\sigma^{-1} \tau_l) - \mathrm{fix}(\sigma) \mathrm{fix}(\sigma^{-1} \tau_{l-1}) \big) \\
& = \sum_{l \in [k]} n(n-2)!\big(\mathrm{fix}(\tau_l) - \mathrm{fix}(\tau_{l-1})\big) \\
& = n(n-2)!\big(\mathrm{fix}(\tau) - \mathrm{fix}(\iota)\big).
\end{align*}

\noindent We obtain the result by replacing $\mathrm{fix}$ with $\mathrm{fix}_z$.
\end{proof}

\begin{corollary}  \label{CoF}
Let $n \geq 4$, and $t$ a variable. The minimal polynomial of $\mathsf{F}(n)$ is $$t(t - n!z)\big(t - n(n-2)!z\big).$$
\end{corollary}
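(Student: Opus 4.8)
The plan is to convert Proposition~\ref{Prfix} into a single quadratic matrix identity, eliminate the rank‑one term it produces, and read off the minimal polynomial; this is the natural route because, as the introduction explains, the individual characters $\chi_\lambda^\mu$ needed for a representation‑theoretic computation are not available.

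\textbf{Step 1 (a quadratic relation).} Observe that $\mathsf{F}(n)$ is the matrix, in the basis $\mathfrak{S}_n$ of the regular representation, of left multiplication by $\mathsf{f}_z(n)=\sum_{\sigma}\mathrm{fix}_z(\sigma)\sigma$, and that the all-ones matrix $\mathsf{J}$ is the matrix of left multiplication by $\sum_{\sigma}\sigma$ (in particular $\mathsf{f}_z(n)$ is central, $\mathrm{fix}$ being a class function). Then $\mathsf{F}(n)^2$ is the matrix of left multiplication by $\mathsf{f}_z(n)^2=\sum_{\tau}\big(\sum_{\sigma}\mathrm{fix}_z(\sigma)\mathrm{fix}_z(\sigma^{-1}\tau)\big)\tau$, and inserting Proposition~\ref{Prfix} makes the coefficient of $\tau$ an affine function of $\mathrm{fix}_z(\tau)$ with the same linear coefficient for every $\tau$. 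Translating back to matrices gives $\mathsf{F}(n)^2=c_1\,\mathsf{J}+n(n-2)!\,z\,\mathsf{F}(n)$ for an explicit $c_1\in\mathbb{R}[z]$; one computes $c_1=n(n-2)(n-2)!\,z^2$ using $\sum_{\sigma\in\mathfrak{S}_n}\mathrm{fix}(\sigma)^2=2\,n!$, but only the coefficient $n(n-2)!\,z$ of $\mathsf{F}(n)$ will be essential. The first lemma of this section says each row of $\mathsf{F}(n)$ sums to $n!z$, so $\mathsf{F}(n)\mathsf{J}=n!z\,\mathsf{J}$.

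\textbf{Step 2 (an annihilating cubic).} Rewrite the Step~1 identity as $c_1\,\mathsf{J}=\mathsf{F}(n)^2-n(n-2)!z\,\mathsf{F}(n)$, multiply on the left by $\mathsf{F}(n)$, and use $\mathsf{F}(n)\mathsf{J}=n!z\,\mathsf{J}$ to replace $c_1\,\mathsf{F}(n)\mathsf{J}=n!z\,c_1\,\mathsf{J}$ by $n!z\big(\mathsf{F}(n)^2-n(n-2)!z\,\mathsf{F}(n)\big)$. Comparing the two expressions for $\mathsf{F}(n)^3$ yields $\mathsf{F}(n)^3-\big(n!z+n(n-2)!z\big)\mathsf{F}(n)^2+n!\,n(n-2)!\,z^2\,\mathsf{F}(n)=0$, that is $\mathsf{F}(n)\big(\mathsf{F}(n)-n!z\,\mathsf{I}\big)\big(\mathsf{F}(n)-n(n-2)!z\,\mathsf{I}\big)=0$, where $\mathsf{I}$ is the identity matrix. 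Hence the minimal polynomial of $\mathsf{F}(n)$ divides $t(t-n!z)(t-n(n-2)!z)$, and since for $n\geq 4$ the roots $0$, $n!z$, $n(n-2)!z$ are pairwise distinct in $\mathbb{R}[z]$, the minimal polynomial is a product of a subset of these three linear factors.

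\textbf{Step 3 (minimality).} I must show all three factors are forced. The all-ones vector is an eigenvector of $\mathsf{F}(n)$ with eigenvalue $n!z$ by the row-sum computation, so $t-n!z$ divides. The vector $(\mathrm{sgn}\,\tau)_{\tau\in\mathfrak{S}_n}$ lies in the kernel of $\mathsf{F}(n)$: its $\sigma$-coordinate under $\mathsf{F}(n)$ equals $\mathrm{sgn}(\sigma)\sum_{\pi}\mathrm{sgn}(\pi)\mathrm{fix}_z(\pi)$, and $\sum_{\pi\in\mathfrak{S}_n}\mathrm{sgn}(\pi)\mathrm{fix}(\pi)=\sum_i\sum_{\pi(i)=i}\mathrm{sgn}(\pi)=0$ for $n\geq 3$, each inner sum being a sign-sum over a copy of $\mathfrak{S}_{n-1}$; hence $t$ divides and $0\in\mathrm{Sp}(\mathsf{F}(n))$. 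Finally, if $t-n(n-2)!z$ did not divide, the minimal polynomial would be $t(t-n!z)$, i.e. $\mathsf{F}(n)^2=n!z\,\mathsf{F}(n)$; subtracting the Step~1 identity gives $\big(n!z-n(n-2)!z\big)\mathsf{F}(n)=c_1\,\mathsf{J}$, i.e. $n(n-2)(n-2)!\,z\,\mathsf{F}(n)=n(n-2)(n-2)!\,z^2\,\mathsf{J}$, so $\mathsf{F}(n)=z\,\mathsf{J}$ after cancelling the nonzero polynomial $n(n-2)(n-2)!\,z$ — contradicting that the diagonal entry of $\mathsf{F}(n)$ is $\mathrm{fix}_z(\iota)=nz\neq z$. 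Therefore the minimal polynomial is exactly $t(t-n!z)(t-n(n-2)!z)$.

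\textbf{Expected obstacle.} The combinatorial heavy lifting is already contained in Proposition~\ref{Prfix}, so the only delicate point is the bookkeeping of powers of $z$ and of factorials in passing from Proposition~\ref{Prfix} to the matrix identity of Step~1, in particular confirming that the coefficient of $\mathsf{F}(n)$ is precisely $n(n-2)!\,z$. As a consistency check, $n(n-2)!\,z$ is also the scalar by which the central element $\mathsf{f}_z(n)$ acts on the standard module $S^{(n-1,1)}$, namely $\tfrac{1}{n-1}\sum_{\sigma}\mathrm{fix}_z(\sigma)\big(\mathrm{fix}(\sigma)-1\big)=\tfrac{1}{n-1}(2\,n!-n!)\,z$. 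Everything else is elementary linear algebra over $\mathbb{R}[z]$.
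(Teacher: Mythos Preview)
Your proof is correct and follows essentially the same route as the paper: translate Proposition~\ref{Prfix} into the identity $\mathsf{F}(n)^2 - n(n-2)!z\,\mathsf{F}(n) = c_1\mathsf{J}$, then multiply by $\mathsf{F}(n)-n!z\,\mathsf{I}$ to kill the rank-one term (the paper carries out the identical computation in the group algebra rather than in matrix language). Your minimality check differs in execution --- you exhibit the all-ones and sign eigenvectors and derive a contradiction for the third root, whereas the paper expands two of the quadratic subproducts and verifies they are nonzero --- but the overall strategy is the same.
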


\begin{proof}
Using Proposition~\ref{Prfix}, we obtain
\begin{align*}
\sum_{\sigma \in \mathfrak{S}_n} \mathrm{fix}_z(\sigma) & \sigma \Big(\sum_{\sigma \in \mathfrak{S}_n} \mathrm{fix}_z(\sigma)\sigma - n(n-2)!z \iota\Big) = \sum_{\tau \in \mathfrak{S}_n} \sum_{\sigma \in \mathfrak{S}_n} \mathrm{fix}_z(\sigma) \mathrm{fix}_z(\tau) \sigma \tau - n(n-2)!z \sum_{\tau \in \mathfrak{S}_n} \mathrm{fix}_z(\tau)\tau  \\
& = \sum_{\tau \in \mathfrak{S}_n} \sum_{\sigma \in \mathfrak{S}_n} \mathrm{fix}_z(\sigma) \mathrm{fix}_z(\sigma^{-1}\tau)\tau - n(n-2)!z \sum_{\tau \in \mathfrak{S}_n} \mathrm{fix}_z(\tau)\tau  \\
& = \sum_{\tau \in \mathfrak{S}_n} \Big( \sum_{\sigma \in \mathfrak{S}_n} \mathrm{fix}_z(\sigma) \mathrm{fix}_z(\sigma^{-1}) - n(n-2)!z \big(\mathrm{fix}_z(\iota) - \mathrm{fix}_z(\tau)\big) \Big) \tau - n(n-2)!z \sum_{\tau \in \mathfrak{S}_n} \mathrm{fix}_z(\tau)\tau  \\
& = \sum_{\tau \in \mathfrak{S}_n} \big( \sum_{\sigma \in \mathfrak{S}_n} \mathrm{fix}_z(\sigma) \mathrm{fix}_z(\sigma^{-1}) - n(n-2)!z \mathrm{fix}_z(\iota) \big) \tau  \\
& = \sum_{\sigma \in \mathfrak{S}_n} \Lambda \sigma
\end{align*}
with $\displaystyle \Lambda = \sum_{\sigma \in \mathfrak{S}_n} \mathrm{fix}_z(\sigma) \mathrm{fix}_z(\sigma^{-1}) - n(n-2)!z \mathrm{fix}_z(\iota)$. Thus,
\begin{align*}
\sum_{\sigma \in \mathfrak{S}_n} \mathrm{fix}_z(\sigma)\sigma & \Big(\sum_{\sigma \in \mathfrak{S}_n} \mathrm{fix}_z(\sigma)\sigma - n(n-2)!z \iota\Big) \Big(\sum_{\sigma \in \mathfrak{S}_n} \mathrm{fix}_z(\sigma)\sigma - n!z \iota\Big) = \Lambda \sum_{\sigma \in \mathfrak{S}_n}\sigma \Big(\sum_{\sigma \in \mathfrak{S}_n} \mathrm{fix}_z(\sigma)\sigma - n!z \iota\Big) \\
& = \Lambda \Big(\sum_{\tau \in \mathfrak{S}_n} \sum_{\sigma \in \mathfrak{S}_n} \mathrm{fix}_z(\tau) \sigma \tau - n!z \sum_{\tau \in \mathfrak{S}_n} \tau\Big)  \\
& = \Lambda \Big(\sum_{\tau \in \mathfrak{S}_n} \sum_{\sigma \in \mathfrak{S}_n} \mathrm{fix}_z(\sigma^{-1} \tau) \tau - n!z \sum_{\tau \in \mathfrak{S}_n} \tau\Big)  \\
& = \Lambda \Big(\sum_{\tau \in \mathfrak{S}_n} n!z \tau - n!z \sum_{\tau \in \mathfrak{S}_n} \tau\Big)  \\
& = 0.
\end{align*}
Hence, the minimal polynomial of $\mathsf{F}(n)$ divides $t(t - n!z)\big(t - n(n-2)!z\big)$. Furthermore, as $\displaystyle \sum_{\sigma \in \mathfrak{S}_n} \mathrm{fix}_z(\sigma)\sigma \Big(\sum_{\sigma \in \mathfrak{S}_n} \mathrm{fix}_z(\sigma)\sigma - n!z \iota\Big) = \sum_{\tau \in \mathfrak{S}_n} \Big( \sum_{\sigma \in \mathfrak{S}_n} \mathrm{fix}_z(\sigma) \mathrm{fix}_z(\sigma^{-1}) - n(n-2)! \mathrm{fix}_z(\iota) - n(n-2)(n-2)! \mathrm{fix}_z(\tau) \Big) \tau \neq 0$ and $\displaystyle \Big(\sum_{\sigma \in \mathfrak{S}_n} \mathrm{fix}_z(\sigma)\sigma - n(n-2)!z \iota\Big) \Big(\sum_{\sigma \in \mathfrak{S}_n} \mathrm{fix}_z(\sigma)\sigma - n!z \iota\Big) = \sum_{\tau \in \mathfrak{S}_n} \Big( \sum_{\sigma \in \mathfrak{S}_n} \mathrm{fix}_z(\sigma) \mathrm{fix}_z(\sigma^{-1}) - n(n-2)! \mathrm{fix}_z(\iota) - n! \mathrm{fix}_z(\tau) \Big) \tau + n(n-2)!n!\iota \neq 0$, then the minimal polynomial of $\mathsf{F}(n)$ divides neither $t(t - n!z)$ nor $(t - n!z)\big(t - n(n-2)!z\big)$.
\end{proof}

\noindent We finally need the following lemma for the proof of Theorem~\ref{ThF}.

\begin{lemma}  \label{LeJ}
Let $n \geq 4$, and $p(z) \in \mathbb{R}[z] \setminus \{0\}$. Then, the regular representation $\mathsf{J}_{n, p(z)}$ of $\displaystyle \sum_{\sigma \in \mathfrak{S}_n} p(z) \sigma \in \mathbb{R}[z][\mathfrak{S}_n]$ is diagonalizable, and $$\mathrm{Sp}(\mathsf{J}_{n, p(z)}) = \big\{n!p(z),\, 0\big\} \quad \text{with} \quad \mathrm{m}_{\mathsf{J}_{n, p(z)}}\big(n!p(z)\big) = 1,\ \mathrm{m}_{\mathsf{J}_{n, p(z)}}(0) = n!-1.$$ 
\end{lemma}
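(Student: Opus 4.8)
\emph{Proof proposal.} The plan is to identify $\mathsf{J}_{n,p(z)}$ with $p(z)$ times the $n!\times n!$ all-ones matrix and then read everything off from its rank-one structure. Concretely, multiplication by $\sum_{\sigma\in\mathfrak{S}_n}p(z)\sigma$ sends each basis vector $\tau\in\mathfrak{S}_n$ of $\mathbb{R}[z][\mathfrak{S}_n]$ to $p(z)\sum_{\rho\in\mathfrak{S}_n}\rho$, which does not depend on $\tau$. Hence, in the basis $\mathfrak{S}_n$, one has $\mathsf{J}_{n,p(z)} = p(z)\,\mathsf{U}$, where $\mathsf{U}$ denotes the $n!\times n!$ matrix all of whose entries equal $1$.

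Now $\mathsf{U}$ has rank $1$ and satisfies $\mathsf{U}^2 = n!\,\mathsf{U}$, so $\mathsf{J}_{n,p(z)}^2 = n!\,p(z)\,\mathsf{J}_{n,p(z)}$; equivalently, $\mathsf{J}_{n,p(z)}$ is annihilated by $t\bigl(t-n!\,p(z)\bigr)$. Since $p(z)\neq 0$, this polynomial is squarefree, so $\mathsf{J}_{n,p(z)}$ is diagonalizable and $\mathrm{Sp}(\mathsf{J}_{n,p(z)})\subseteq\{0,\,n!\,p(z)\}$. The all-ones vector $\mathbf{1}$ satisfies $\mathsf{J}_{n,p(z)}\mathbf{1}=n!\,p(z)\,\mathbf{1}$, so $n!\,p(z)$ is an eigenvalue; and $\mathrm{rank}\,\mathsf{J}_{n,p(z)}=1$ forces $\dim\ker\mathsf{J}_{n,p(z)}=n!-1$, whence $0$ has multiplicity $n!-1$ and, by counting dimensions, $n!\,p(z)$ has multiplicity exactly $1$.

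The only point requiring care is the meaning of ``diagonalizable'' over the coefficient ring $\mathbb{R}[z]$ rather than over a field, but it causes no difficulty here: a diagonalizing basis — namely $\mathbf{1}$ together with any basis of the hyperplane of vectors whose coordinates sum to $0$ — can be chosen with entries in $\mathbb{R}$, independent of $z$. I expect no genuine obstacle in this lemma; it is simply the rank-one base case underlying the heavier computations already carried out, and it will be combined with Corollary~\ref{CoF} to conclude the proof of Theorem~\ref{ThF}.
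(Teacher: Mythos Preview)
Your proof is correct and follows essentially the same approach as the paper: both recognize that $\mathsf{J}_{n,p(z)}$ is $p(z)$ times the all-ones matrix and exhibit the eigenspaces explicitly --- the paper simply writes them down as $\big\langle \sum_{\sigma}\sigma \big\rangle$ and $\langle \iota - \sigma \rangle_{\sigma \neq \iota}$, while you add the minimal-polynomial and rank arguments and a careful remark about working over $\mathbb{R}[z]$. Your version is more thorough, but the underlying idea is identical.
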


\begin{proof}
The eigenspace of $\mathsf{J}_{n, p(z)}$ associated to the eigenvalue $n!p(z)$ is $\displaystyle \Big\langle \sum_{\sigma \in \mathfrak{S}_n}\sigma \Big\rangle$, and that associated to $0$ is $\langle \iota - \sigma \rangle_{\sigma \in \mathfrak{S}_n \setminus \{\iota\}}$.
\end{proof}

\noindent Denote by $\mathrm{diag}\,\mathsf{M}$ the diagonalized matrix of a square matrix $\mathsf{M}$. We can now proceed to the proof of Theorem~\ref{ThF}.

\begin{proof}
From Corollary~\ref{CoF}, we know that $\mathsf{F}(n)$ is diagonalizable, and $$\mathrm{Sp}\big(\mathsf{F}(n)\big) = \big\{n!z,\, n(n-2)!z,\, 0\big\}.$$
We deduce that $\mathsf{X}_n = \mathsf{F}(n) - n(n-2)!z \mathsf{I}_{n!}$ is a also diagonalizable, and $$\mathrm{Sp}(\mathsf{X}_n) = \big\{n(n-2)(n-2)!z,\, -n(n-2)!z,\, 0\big\} \quad \text{with}$$
\begin{itemize}
\item $\mathrm{m}_{\mathsf{X}_n}\big(n(n-2)(n-2)!z\big) = \mathrm{m}_{\mathsf{F}(n)}(n!z)$,
\item $\mathrm{m}_{\mathsf{X}_n}\big(-n(n-2)!z\big) = \mathrm{m}_{\mathsf{F}(n)}(0)$,
\item $\mathrm{m}_{\mathsf{X}_n}(0) = \mathrm{m}_{\mathsf{F}(n)}\big(n(n-2)!z\big)$.
\end{itemize}
Using the first calculation in the proof of Corollary~\ref{CoF} and Lemma~\ref{LeJ}, we have on one side $$\mathrm{diag}\, \mathsf{F}(n) \mathsf{X}_n = \Lambda \mathsf{I}_1 \oplus 0 \mathsf{I}_{n!-1}.$$
On the other side, as $\mathsf{F}(n)$ and $\mathsf{X}_n$ are simultaneously, then
\begin{align*}
\mathrm{diag}\, \mathsf{F}(n) \mathsf{X}_n & = \mathrm{diag}\,\mathsf{F}(n) \times \mathrm{diag}\, \mathsf{X}_n  \\
& = n!z \mathsf{I}_{\mathrm{m}_{\mathsf{F}(n)}(n!z)} \oplus n(n-2)!z \mathsf{I}_{\mathrm{m}_{\mathsf{F}(n)}(n(n-2)!z)} \oplus 0 \mathsf{I}_{\mathrm{m}_{\mathsf{F}(n)}(0)} \\
& \quad \times n(n-2)(n-2)!z \mathsf{I}_{\mathrm{m}_{\mathsf{F}(n)}(n!z)} \oplus 0 \mathsf{I}_{\mathrm{m}_{\mathsf{F}(n)}(n(n-2)!z)} \oplus -n(n-2)!z \mathsf{I}_{\mathrm{m}_{\mathsf{F}(n)}(0)}  \\
& = n(n-2)(n-2)!n!z^2 \mathsf{I}_{\mathrm{m}_{\mathsf{F}(n)}(n!z)} \oplus 0 \mathsf{I}_{\mathrm{m}_{\mathsf{F}(n)}(n(n-2)!z)} \oplus 0 \mathsf{I}_{\mathrm{m}_{\mathsf{F}(n)}(0)}.
\end{align*}
We deduce after comparison that $\mathrm{m}_{\mathsf{F}(n)}(n!z) = 1$.

\noindent As $\mathrm{tr}\,\mathsf{F}(n) = nn!z$, thus $$n!z \, \mathrm{m}_{\mathsf{F}(n)}(n!z) + n(n-2)!z \, \mathrm{m}_{\mathsf{F}(n)}\big(n(n-2)!z\big) + 0 \, \mathrm{m}_{\mathsf{F}(n)}(0) = nn!z,$$
and $\mathrm{m}_{\mathsf{F}(n)}\big(n(n-2)!z\big) = (n-1)^2$.

\noindent Finally, since $\mathsf{F}(n)$ is a $n! \times n!$-matrix, then
$$\mathrm{m}_{\mathsf{F}(n)}(n!z) + \mathrm{m}_{\mathsf{F}(n)}\big(n(n-2)!z\big) + \mathrm{m}_{\mathsf{F}(n)}(0) = n!,$$
and $\mathrm{m}_{\mathsf{F}(n)}(0) = n! - (n-1)^2 -1$.
\end{proof}

\noindent Remark that $\Lambda = n(n-2)(n-2)!n!z^2$ implies $\displaystyle \sum_{\sigma \in \mathfrak{S}_n} \mathrm{fix}(\sigma)^2 = n^2(n-2)!\big((n-2)(n-1)! +1\big)$.

\section{Diagonalization of $\mathscr{X}_{S^{(n-1,1)}}\big(\mathsf{i}_y(n) + \mathsf{f}_z(n)\big)$}  \label{SeSp}

\noindent We prove Theorem~\ref{ThSp} in this section. We first compute $\mathrm{diag}\,\mathscr{X}_{S^{(n-1,1)}}\big(\mathsf{f}_z(n)\big)$, and then $\mathrm{diag}\,\mathscr{X}_{S^{(n-1,1)}}\big(\mathsf{i}_y(n)\big)$. Recall that $\{\mathbf{1}-\mathbf{i}\}_{i \in [n] \setminus \{1\}}$ is a basis of the Specht module $S^{(n-1,1)}$.

\begin{proposition}  \label{Prfix2}
For $n \geq 4$, $\mathscr{X}_{S^{(n-1,1)}}\big(\mathsf{f}_z(n)\big) = n(n-2)!z\mathsf{I}_{n-1}$.
\end{proposition}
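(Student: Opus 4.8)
The plan is to compute the action of $\mathsf{f}_z(n) = \sum_{\sigma \in \mathfrak{S}_n} \mathrm{fix}_z(\sigma)\sigma$ on the basis $\{\mathbf{1}-\mathbf{i}\}_{i \in [n]\setminus\{1\}}$ of $S^{(n-1,1)}$ directly inside the permutation module $\mathbb{R}[z]\{\mathbf{1},\dots,\mathbf{n}\}$ (the natural $n$-dimensional module), of which $S^{(n-1,1)}$ is the codimension-one submodule spanned by the $\mathbf{1}-\mathbf{i}$. Since $\mathsf{f}_z(n)$ is central in the group algebra, it acts on the irreducible module $S^{(n-1,1)}$ as a scalar; so it suffices to identify that scalar, and the obvious candidate is $n(n-2)!z$ given by Equation~\ref{ch} with $\chi_{(n-1,1)}$, but we want a self-contained computation. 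First I would note that for a basis vector $\mathbf{k}$ of the natural module, $\sigma \cdot \mathbf{k} = \boldsymbol{\sigma(k)}$, so
$$\mathsf{f}_z(n)\cdot \mathbf{k} = \sum_{\sigma \in \mathfrak{S}_n} \mathrm{fix}_z(\sigma)\,\boldsymbol{\sigma(k)} = z\sum_{l \in [n]} \Big(\sum_{\substack{\sigma \in \mathfrak{S}_n \\ \sigma(k)=l}} \#\mathrm{Fix}(\sigma)\Big)\mathbf{l}.$$

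Next I would evaluate the inner coefficient $c_{k,l} := \sum_{\sigma(k)=l} \#\mathrm{Fix}(\sigma)$ in the two cases. For $l=k$: among permutations fixing $k$, the fixed-point count is $1$ plus the fixed-point count of the induced permutation of $\mathfrak{S}_{n-1}$, so $c_{k,k} = (n-1)! + \sum_{\sigma\in\mathfrak{S}_{n-1}}\#\mathrm{Fix}(\sigma) = (n-1)! + (n-1)\cdot(n-2)! = 2(n-1)!$, using the elementary fact $\sum_{\sigma\in\mathfrak{S}_m}\#\mathrm{Fix}(\sigma) = m!$ (each point is fixed by $(m-1)!$ permutations). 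For $l \neq k$: no such $\sigma$ fixes $k$ or $l$, and the fixed points all lie in $[n]\setminus\{k,l\}$; by symmetry each of those $n-2$ points is fixed by the same number of permutations with $\sigma(k)=l$, namely $(n-3)!$ many (fix the point, send $k \mapsto l$, permute the rest), so $c_{k,l} = (n-2)\cdot(n-3)! = (n-2)!$. Therefore
$$\mathsf{f}_z(n)\cdot \mathbf{k} = z\Big(2(n-1)!\,\mathbf{k} + (n-2)!\sum_{l \neq k}\mathbf{l}\Big) = (n-2)!z\Big((2(n-1)-1)\mathbf{k} + \sum_{l \in [n]}\mathbf{l}\Big),$$
i.e. $\mathsf{f}_z(n)\cdot\mathbf{k} = (n-2)!z\big((2n-3)\mathbf{k} + \mathbf{S}\big)$ where $\mathbf{S} := \sum_{l\in[n]}\mathbf{l}$.

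Finally I would subtract: $\mathsf{f}_z(n)\cdot(\mathbf{1}-\mathbf{k}) = (n-2)!z(2n-3)(\mathbf{1}-\mathbf{k})$, since the $\mathbf{S}$ terms cancel. Hmm — this gives the scalar $(2n-3)(n-2)!z$, not $n(n-2)!z$; so I must recheck $c_{k,k}$. The induced permutation argument is right, but let me recount: a permutation of $[n]$ fixing $k$ restricts to a bijection of $[n]\setminus\{k\}$, and there are $(n-1)!$ of them; the total fixed-point mass over $\mathfrak{S}_{n-1}$ is $(n-1)!$ not $(n-1)(n-2)!$ — wait, $(n-1)(n-2)! = (n-1)!$, so that's consistent, and $c_{k,k} = (n-1)! + (n-1)! = 2(n-1)! = 2(n-1)(n-2)!$. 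And $(2n-2) - 1 = 2n-3$, so the scalar really is $(2n-3)(n-2)!z$. The discrepancy with the claimed $n(n-2)!z$ is the one genuine obstacle, and it signals I have mis-set-up the module action: one must use the Specht module $S^{(n-1,1)}$ as realized with the \emph{correct} (signed) action, or equivalently recall that the eigenvalue on $S^{(n-1,1)}$ is $\frac{1}{d_\lambda}\sum_\mu \mathrm{fix}(\mu)\#\mathfrak{S}_n^\mu\chi_\lambda^\mu$; the clean route is to compute $c_{k,k}$ and $c_{k,l}$ correctly — in particular $c_{k,l}$ for $l\neq k$ should be recomputed allowing $\sigma(k)=l$ with $\sigma$ possibly fixing neither but the count of fixed-point mass is $(n-2)\cdot(\#\{\sigma: \sigma(k)=l,\sigma(m)=m\})$, and fixing one point $m \notin\{k,l\}$ while $\sigma(k)=l$ leaves $n-2$ positions (namely $[n]\setminus\{m\}$) to be filled bijectively with $k\mapsto l$ forced among $n-1$ remaining values, i.e. $(n-3)!$ — so the hard part is bookkeeping the orbit counts, after which the scalar follows and, being a scalar on an irreducible, must equal $n(n-2)!z$ once the arithmetic is done consistently with the conventions of Equation~\ref{ch}.
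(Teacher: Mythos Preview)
Your overall strategy---computing $\mathsf{f}_z(n)\cdot\mathbf{k}$ in the natural permutation module and then taking the difference $\mathbf{1}-\mathbf{k}$---is sound and in fact cleaner than the paper's proof, which computes the matrix entries $[\mathbf{1}-\mathbf{j}]\,\mathsf{f}_z(n)(\mathbf{1}-\mathbf{i})$ case by case via Lemmas~\ref{Leij} and~\ref{Leijk}. The problem is a single counting slip in $c_{k,l}$ for $l\neq k$, after which you spend the rest of the proposal chasing a phantom discrepancy and blaming module conventions.

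For $m\notin\{k,l\}$, the number of permutations with $\sigma(k)=l$ and $\sigma(m)=m$ is $(n-2)!$, not $(n-3)!$: exactly two positions ($k$ and $m$) have their images prescribed, and the remaining $n-2$ positions are filled bijectively by the remaining $n-2$ values. (This is precisely the quantity $\mathsf{x}=(n-2)(n-2)!$ appearing in the proof of Lemma~\ref{Leij}\,(4).) Hence $c_{k,l}=(n-2)(n-2)!$ for $l\neq k$, and
\[
\mathsf{f}_z(n)\cdot\mathbf{k}
 = z\Bigl(2(n-1)!\,\mathbf{k}+(n-2)(n-2)!\,(\mathbf{S}-\mathbf{k})\Bigr)
 = z\Bigl(n(n-2)!\,\mathbf{k}+(n-2)(n-2)!\,\mathbf{S}\Bigr),
\]
since $2(n-1)!-(n-2)(n-2)!=\bigl(2(n-1)-(n-2)\bigr)(n-2)!=n(n-2)!$. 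Subtracting now gives $\mathsf{f}_z(n)\cdot(\mathbf{1}-\mathbf{k})=n(n-2)!z\,(\mathbf{1}-\mathbf{k})$, exactly the claimed scalar. There is no issue with Specht-module conventions or with Equation~\ref{ch}; the only obstacle was the slip $(n-2)!\to(n-3)!$ in your count.
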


\begin{proof}
Let $i,j \in [n] \setminus \{1\}$ with $i \neq j$. Using Lemma~\ref{Leij}, we obtain
\begin{align*}
[\mathbf{1}-\mathbf{i}]\mathsf{f}_z(n)(\mathbf{1}-\mathbf{i}) & = \sum_{\substack{\sigma \in \mathfrak{S}_n \\ \sigma(1)=1,\, \sigma(i)=i}}\mathrm{fix}_z(\sigma) - \sum_{\substack{\sigma \in \mathfrak{S}_n \\ \sigma(1)=i,\, \sigma(i)=1}}\mathrm{fix}_z(\sigma) + \sum_{\substack{\sigma \in \mathfrak{S}_n \\ \sigma(1) \neq 1,\, \sigma(i)=i}}\mathrm{fix}_z(\sigma) - \sum_{\substack{\sigma \in \mathfrak{S}_n \\ \sigma(1)=i,\, \sigma(i) \neq 1}}\mathrm{fix}_z(\sigma)  \\
& = 3(n-2)!z - (n-2)!z + (2n-5)(n-2)!z - (n-3)(n-2)!z \\
& = n(n-2)!z.
\end{align*}

\noindent Using Lemma~\ref{Leijk}, we obtain
\begin{align*}
[\mathbf{1}-\mathbf{j}]\mathsf{f}_z(n)(\mathbf{1}-\mathbf{i}) & = \sum_{\substack{\sigma \in \mathfrak{S}_n \\ \sigma(1)=1,\, \sigma(i)=j}}\mathrm{fix}_z(\sigma) - \sum_{\substack{\sigma \in \mathfrak{S}_n \\ \sigma(1)=j,\, \sigma(i)=1}}\mathrm{fix}_z(\sigma) + \sum_{\substack{\sigma \in \mathfrak{S}_n \\ \sigma(1) \neq 1,\, \sigma(i)=j}}\mathrm{fix}_z(\sigma) - \sum_{\substack{\sigma \in \mathfrak{S}_n \\ \sigma(1)=j,\, \sigma(i) \neq 1}}\mathrm{fix}_z(\sigma)  \\
& = (2n-5)(n-3)!z - (n-3)(n-3)!z + (n^2-6n+9)(n-3)!z - (n^2-5n+7)(n-3)!z \\
& = 0.
\end{align*}
\end{proof}

\begin{lemma}  \label{LeYij}
Let $n \geq 4$, and $i,j \in [n] \setminus \{1\}$. Then,
\begin{align*}
& 1.\ \sum_{\substack{\sigma \in \mathfrak{S}_n \\ \sigma(1)=1,\, \sigma(i)=j}} \mathrm{inv}_y(\sigma) = & (n-j)(n-3)!\sum_{l \in [i-1] \setminus \{1\}}y_{l,i} + (j-2)(n-3)!\sum_{m \in [n] \setminus [i]}y_{i,m} \\
&& + \frac{(n-2)!}{2}\sum_{\substack{l,m \in [n] \setminus \{1,i\}}}y_{l,m}, \\
& 2.\ \sum_{\substack{\sigma \in \mathfrak{S}_n \\ \sigma(1)=j,\, \sigma(i)=1}} \mathrm{inv}_y(\sigma) = & (n-2)!y_{1,i} + (j-2)(n-3)!\sum_{m \in [n] \setminus \{1,i\}}y_{1,m} \\
&& + (n-2)(n-3)!\sum_{l \in [i-1] \setminus \{1\}}y_{l,i} + \frac{(n-2)!}{2}\sum_{\substack{l,m \in [n] \setminus \{1,i\}}}y_{l,m}, \\
& 3.\ \sum_{\substack{\sigma \in \mathfrak{S}_n \\ \sigma(1) \neq 1,\, \sigma(i)=j}} \mathrm{inv}_y(\sigma) = & (n-j)(n-2)!y_{1,i} + \binom{n-1}{2}(n-3)!\sum_{m \in [n] \setminus \{1,i\}}y_{1,m}  \\
&& + \big((j-1)(n-4) + j\big)(n-3)!\sum_{m \in [n] \setminus [i]}y_{i,m} \\
&& + (n-j)(n-3)(n-3)!\sum_{l \in [i-1] \setminus \{1\}}y_{l,i} + \frac{n-2}{2}(n-2)!\sum_{\substack{l,m \in [n] \setminus \{1,i\}}}y_{l,m}, \\
& 4.\ \sum_{\substack{\sigma \in \mathfrak{S}_n \\ \sigma(1)=j,\, \sigma(i) \neq 1}} \mathrm{inv}_y(\sigma) = & (j-2)(n-2)!y_{1,i} + \big((j-1)(n-4) + j\big)(n-3)!\sum_{m \in [n] \setminus \{1,i\}}y_{1,m}  \\
&& + \binom{n-2}{2}(n-3)!\sum_{l \in [i-1] \setminus \{1\}}y_{l,i} + \binom{n-1}{2}(n-3)!\sum_{m \in [n] \setminus [i]}y_{i,m}  \\
&& + \frac{n-2}{2}(n-2)!\sum_{\substack{l,m \in [n] \setminus \{1,i\}}}y_{l,m}.
\end{align*}
\end{lemma}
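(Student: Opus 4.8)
The plan is to use the linearity of $\mathrm{inv}_y$: writing $\mathrm{inv}_y(\sigma)=\sum_{(k,l)\in\mathrm{Inv}\,\sigma}y_{k,l}$ and swapping the order of summation, for any subset $S\subseteq\mathfrak{S}_n$ the coefficient of $y_{k,l}$ (with $k<l$) in $\sum_{\sigma\in S}\mathrm{inv}_y(\sigma)$ equals $\#\{\sigma\in S:\sigma(k)>\sigma(l)\}$. So the whole lemma reduces to four families of counting problems, and I would organize each count by the location of the pair $\{k,l\}$ relative to the constrained positions $\{1,i\}$. The five regimes are: $\{k,l\}=\{1,i\}$; $k=1$ with $l\in[n]\setminus\{1,i\}$; $l=i$ with $k\in[i-1]\setminus\{1\}$; $k=i$ with $l\in[n]\setminus[i]$; and $k,l\in[n]\setminus\{1,i\}$.

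First I would settle parts 1 and 2, where both relevant images are pinned down. For part 1, a permutation with $\sigma(1)=1$ and $\sigma(i)=j$ restricts to a bijection $[n]\setminus\{1,i\}\to[n]\setminus\{1,j\}$, and there are $(n-2)!$ of them. For a pair $\{k,l\}$ disjoint from $\{1,i\}$, the involution $\sigma\mapsto\sigma\circ(k\,l)$ on these bijections swaps $\{\sigma(k)>\sigma(l)\}$ with its complement, giving coefficient $\tfrac{(n-2)!}{2}$. For $\{1,i\}$ the inequality $1=\sigma(1)>\sigma(i)=j$ is impossible ($j\ge2$), and for $\{1,m\}$ the inequality $1>\sigma(m)$ is impossible, so those coefficients are $0$; for $\{l,i\}$ with $1<l<i$ one needs $\sigma(l)>j$, which holds for the $n-j$ values of $\sigma(l)\in[n]\setminus\{1,j\}$ exceeding $j$, giving $(n-j)(n-3)!$; for $\{i,m\}$ with $m>i$ one needs $j>\sigma(m)$, met by the $j-2$ values in $\{2,\dots,j-1\}$, giving $(j-2)(n-3)!$. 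Part 2 is entirely parallel with $\sigma(1)=j$, $\sigma(i)=1$: now $\sigma(1)=j>\sigma(i)=1$ and $\sigma(l)>\sigma(i)=1$ always hold, $\sigma(i)=1>\sigma(m)$ never holds, $\sigma(1)=j>\sigma(m)$ holds for $j-2$ values, and the disjoint pairs again give $\tfrac{(n-2)!}{2}$.

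Then I would get parts 3 and 4 by complementation instead of a fresh count. Since $\{\sigma(1)\ne1,\ \sigma(i)=j\}$ is the complement inside $\{\sigma(i)=j\}$ of the part-1 set, one has $\sum_{\sigma(1)\ne1,\,\sigma(i)=j}\mathrm{inv}_y(\sigma)=\sum_{\sigma(i)=j}\mathrm{inv}_y(\sigma)-(\text{part 1})$; the auxiliary sum over all $\sigma$ with $\sigma(i)=j$ is computed by the same five-regime analysis but with only position $i$ pinned — the symmetry involution still gives $\tfrac{(n-1)!}{2}$ for any pair avoiding $i$, while the pairs $\{k,i\}$ with $k<i$ and $\{i,m\}$ with $m>i$ contribute $(n-j)(n-2)!$ and $(j-1)(n-2)!$ respectively. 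Subtracting part 1 and simplifying, using $\binom{n-1}{2}(n-3)!=\tfrac{(n-1)!}{2}$ and $(j-1)(n-3)+1=(j-1)(n-4)+j$, produces the stated coefficients. Part 4 is the analogous complementation inside $\{\sigma(1)=j\}$: subtract part 2 from $\sum_{\sigma(1)=j}\mathrm{inv}_y(\sigma)$ and simplify using $\binom{n-2}{2}(n-3)!=\tfrac{(n-1)!}{2}-(n-2)!$.

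The arithmetic is routine; the one place needing care is the cross-terms — $y_{1,m}$ in part 3 and $y_{l,i}$ in part 4 — where neither endpoint's image is fixed, so a direct count would be a sum over the free image of the number of admissible smaller images, which is precisely where the binomial coefficients $\binom{n-1}{2}$ and $\binom{n-2}{2}$ appear. Routing these through the complementation is what keeps them tractable, since in the auxiliary sums over $\{\sigma(i)=j\}$ and $\{\sigma(1)=j\}$ those cross-terms fall into the "pair avoiding the pinned position" regime and are dispatched by the symmetry involution; I expect this, together with keeping the five regimes straight across all four parts, to be the main bookkeeping obstacle.
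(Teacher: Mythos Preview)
Your proposal is correct and follows essentially the same method as the paper: in both, the coefficient of each $y_{k,l}$ is read off as the number of permutations in the constrained set with $\sigma(k)>\sigma(l)$, and the count is split according to how $\{k,l\}$ sits relative to $\{1,i\}$. The only difference is organisational: for parts 3 and 4 the paper performs the direct counts afresh, whereas you obtain them by subtracting parts 1 and 2 from the auxiliary one-constraint sums over $\{\sigma(i)=j\}$ and $\{\sigma(1)=j\}$. Your complementation is a mild bookkeeping shortcut---it makes the cross-term coefficients $\binom{n-1}{2}(n-3)!$ and $\binom{n-2}{2}(n-3)!$ drop out of $(n-1)!/2$ minus a lower-order term rather than from a separate summation---but it is not a genuinely different argument.
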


\begin{proof}
1. For $l,m \in [n] \setminus \{1,i\}$, $$\displaystyle [y_{l,m}] \sum_{\substack{\sigma \in \mathfrak{S}_n \\ \sigma(1)=1,\, \sigma(i)=j}} \mathrm{inv}_y(\sigma) = \#\big\{\sigma \in \mathfrak{S}_n\ \big|\ \sigma(1)=1,\, \sigma(i)=j,\, \sigma(l)<\sigma(m)\big\} = \frac{(n-2)!}{2}.$$
For $l \in [i-1] \setminus \{1\}$, $$\displaystyle [y_{l,i}] \sum_{\substack{\sigma \in \mathfrak{S}_n \\ \sigma(1)=1,\, \sigma(i)=j}} \mathrm{inv}_y(\sigma) = \#\big\{\sigma \in \mathfrak{S}_n\ \big|\ \sigma(1)=1,\, \sigma(i)=j,\, \sigma(l)>j\big\} = (n-j)(n-3)!.$$
For $m \in [n] \setminus [i]$, $$\displaystyle [y_{i,m}] \sum_{\substack{\sigma \in \mathfrak{S}_n \\ \sigma(1)=1,\, \sigma(i)=j}} \mathrm{inv}_y(\sigma) = \#\big\{\sigma \in \mathfrak{S}_n\ \big|\ \sigma(1)=1,\, \sigma(i)=j,\, j>\sigma(m)\big\} = (j-2)(n-3)!.$$
2. Like in 1., but we use $\#\big\{\sigma \in \mathfrak{S}_n\ \big|\ \sigma(1)=j,\, \sigma(i)=1\big\} = (n-2)!$,
\begin{align*}
& \text{for}\ m \in [n] \setminus \{1,i\},\ \#\big\{\sigma \in \mathfrak{S}_n\ \big|\ \sigma(1)=j,\, \sigma(i)=1,\, j>\sigma(m)\big\} = (j-2)(n-3)!, \\
& \text{for}\ l \in [i] \setminus \{1\},\ \#\big\{\sigma \in \mathfrak{S}_n\ \big|\ \sigma(1)=j,\, \sigma(i)=1,\, \sigma(l)>1\big\} = (n-2)(n-3)!.
\end{align*}
3. Like in 1., but we use $\#\big\{\sigma \in \mathfrak{S}_n\ \big|\ \sigma(1) \neq 1,\, \sigma(i)=j,\, \sigma(1)>j\big\} = (n-j)(n-2)!$,
\begin{align*}
& \text{for}\ m \in [n] \setminus \{1,i\},\ \#\big\{\sigma \in \mathfrak{S}_n\ \big|\ \sigma(1) \neq 1,\, \sigma(i)=j,\, \sigma(1)>\sigma(m)\big\} = \binom{n-1}{2}(n-3)!, \\
& \text{for}\ l \in [i-1] \setminus \{1\},\ \#\big\{\sigma \in \mathfrak{S}_n\ \big|\ \sigma(1) \neq 1,\, \sigma(i)=j,\, \sigma(l)>j\big\} = (n-j)(n-3)(n-3)!, \\
& \text{for}\ m \in [n] \setminus [i],\ \#\big\{\sigma \in \mathfrak{S}_n\ \big|\ \sigma(1) \neq 1,\, \sigma(i)=j,\, j>\sigma(m)\big\} = \big((j-1)(n-4) + j\big)(n-3)!, \\
& \text{for}\ l,m \in [n] \setminus \{1,i\},\ \#\big\{\sigma \in \mathfrak{S}_n\ \big|\ \sigma(1) \neq 1,\, \sigma(i)=j,\, \sigma(l)>\sigma(m)\big\} = \frac{n-2}{2}(n-2)!. 
\end{align*}
4. Like in 1., but we use $\#\big\{\sigma \in \mathfrak{S}_n\ \big|\ \sigma(1)=j,\, \sigma(i) \neq 1,\, j>\sigma(i)\big\} = (j-2)(n-2)!$,
\begin{align*}
& \text{for}\ m \in [n] \setminus \{1,i\},\ \#\big\{\sigma \in \mathfrak{S}_n\ \big|\ \sigma(1) =j,\, \sigma(i) \neq 1,\, j>\sigma(m)\big\} = \big((j-1)(n-4) + j\big)(n-3)!,  \\
& \text{for}\ l \in [i-1] \setminus \{1\},\ \#\big\{\sigma \in \mathfrak{S}_n\ \big|\ \sigma(1)=j,\, \sigma(i) \neq 1,\, \sigma(l)>\sigma(i)\big\} = \binom{n-2}{2}(n-3)!,  \\
& \text{for}\ m \in [n] \setminus [i],\ \#\big\{\sigma \in \mathfrak{S}_n\ \big|\ \sigma(1) =j,\, \sigma(i) \neq 1,\, \sigma(i)>\sigma(m)\big\} = \binom{n-1}{2}(n-3)!.
\end{align*}
\end{proof}

\begin{lemma}  \label{LeX}
Let $n \geq 2$, $\lambda_1, \dots, \lambda_n \in \mathbb{R}$, and $\mathsf{x}_1, \dots, \mathsf{x}_n \in \mathbb{R}[y_{1,2}, \dots, y_{n,n-1}]$ such that $\mathsf{x}_1 \neq 0$. Consider the matrix $\mathsf{X} = (\lambda_i \mathsf{x}_j)_{i,j \in [n]}$. Then, $$\mathrm{Sp}(\mathsf{X}) = \Big\{\sum_{i \in [n]} \lambda_i \mathsf{x}_i,\, 0\Big\} \quad \text{with} \quad \mathrm{m}_{\mathsf{X}}\Big(\sum_{i \in [n]} \lambda_i \mathsf{x}_i\Big) = 1,\ \mathrm{m}_{\mathsf{X}}(0) = n-1.$$
\end{lemma}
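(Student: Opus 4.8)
The plan is to recognize $\mathsf{X}$ as a rank-one matrix and to read off its characteristic polynomial. Write $u := (\lambda_1,\dots,\lambda_n)^{\top}$ for the column vector of the scalars $\lambda_i$ and $w := (\mathsf{x}_1,\dots,\mathsf{x}_n)$ for the row vector of the polynomials $\mathsf{x}_j$, so that $\mathsf{X} = u\,w$. For any column vector $v$ one then has $\mathsf{X}v = (w v)\,u$, whence $\mathrm{im}\,\mathsf{X}$ lies on the line spanned by $u$ and $\mathrm{rank}\,\mathsf{X} \le 1$.

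First I would exhibit an explicit $(n-1)$-dimensional subspace of $\ker\mathsf{X}$. For $j \in [n]\setminus\{1\}$ put $v_j := \mathsf{x}_j\mathbf{e}_1 - \mathsf{x}_1\mathbf{e}_j$, where $\mathbf{e}_1,\dots,\mathbf{e}_n$ is the standard basis. Then $w v_j = \mathsf{x}_1\mathsf{x}_j - \mathsf{x}_j\mathsf{x}_1 = 0$, so $\mathsf{X}v_j = 0$; and the vectors $v_2,\dots,v_n$ are linearly independent over the fraction field $\mathbb{R}(y_{1,2},\dots,y_{n,n-1})$ because, read on the coordinates $2,\dots,n$, they form the matrix $-\mathsf{x}_1\mathsf{I}_{n-1}$, which is invertible since $\mathsf{x}_1 \neq 0$. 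Hence $0$ is an eigenvalue of $\mathsf{X}$ of algebraic multiplicity at least $n-1$, so $t^{n-1}$ divides the (monic, degree $n$) characteristic polynomial of $\mathsf{X}$, and the latter equals $t^{n-1}(t-\mu)$ for a unique scalar $\mu$.

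It then remains to identify $\mu$, which I would do by comparing the coefficient of $t^{n-1}$: $\mu = \mathrm{tr}\,\mathsf{X} = \sum_{i \in [n]} \mathsf{X}_{i,i} = \sum_{i \in [n]} \lambda_i\mathsf{x}_i$. Equivalently, one checks directly that $u$ is an eigenvector, $\mathsf{X}u = (w u)\,u = \big(\sum_{i \in [n]}\lambda_i\mathsf{x}_i\big)u$. This yields $\mathrm{Sp}(\mathsf{X}) = \big\{\sum_{i \in [n]}\lambda_i\mathsf{x}_i,\, 0\big\}$ with $\mathrm{m}_{\mathsf{X}}\big(\sum_{i \in [n]}\lambda_i\mathsf{x}_i\big) = 1$ and $\mathrm{m}_{\mathsf{X}}(0) = n-1$.

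I do not expect a genuine obstacle: the argument is entirely linear algebra, and the only point to keep an eye on is that the two listed eigenvalues be distinct, that is $\sum_{i \in [n]}\lambda_i\mathsf{x}_i \neq 0$ (otherwise the characteristic polynomial collapses to $t^n$). This is automatic in each application of the lemma, where $\mathsf{X}$ arises from $\mathscr{X}_{S^{(n-1,1)}}\big(\mathsf{i}_y(n)\big)$ and the trace is a nonzero linear form in the $y_{i,j}$; and the hypothesis $\mathsf{x}_1 \neq 0$ is used precisely to guarantee the independence of the kernel vectors $v_j$.
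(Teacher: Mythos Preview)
Your argument is correct and essentially identical to the paper's: both exhibit the kernel vectors $\mathsf{x}_j\mathbf{e}_1-\mathsf{x}_1\mathbf{e}_j$ for $j\in[n]\setminus\{1\}$ to get $\mathrm{m}_{\mathsf{X}}(0)=n-1$, and then read off the remaining eigenvalue from the trace. Your write-up is a bit more explicit (the rank-one factorization $\mathsf{X}=uw$, the independence check via the $-\mathsf{x}_1\mathsf{I}_{n-1}$ block, and the remark on the degenerate case $\sum_i\lambda_i\mathsf{x}_i=0$), but the method is the same.
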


\begin{proof}
Let $\{\mathbf{e}_i\}_{i \in [n]}$ be the $n$-dimensional standard basis. The eigenspace of $\mathsf{X}$ associated to the eigenvalue $0$ is $\langle \mathsf{x}_i \mathbf{e}_1 - \mathsf{x}_1 \mathbf{e}_i \rangle_{i \in [n] \setminus \{1\}}$. We deduce that $\mathrm{m}_{\mathsf{X}}(0) = n-1$. Hence, the characteristic polynomial of $\mathsf{X}$ is $t^{n-1}(t- \mathrm{tr}\,\mathsf{X})$.
\end{proof}

\begin{lemma}  \label{Leg}
For $n \geq 4$, define
$$\mathsf{g} := \sum_{i \in [n] \setminus \{1\}} \frac{n-2i+1}{2}(n-2)! \Big(2y_{1,i} + \sum_{m \in [n] \setminus \{1,i\}}y_{1,m} + \sum_{l \in [i-1] \setminus \{1\}}y_{l,i} - \sum_{m \in [n] \setminus [i]}y_{i,m}\Big) \in \mathbb{R}[y_{1,2}, \dots, y_{n,n-1}].$$
Then, $\displaystyle \mathsf{g} = -(n-2)! \sum_{\substack{(i,j) \in [n]^2 \\ i<j}} (j-i)y_{i,j}$.
\end{lemma}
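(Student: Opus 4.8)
The plan is to expand $\mathsf{g}$ in the monomial basis $\{y_{a,b}\}_{1 \le a < b \le n}$ of $\mathbb{R}[y_{1,2}, \dots, y_{n,n-1}]$ and to check that the coefficient of each $y_{a,b}$ in $\mathsf{g}$ equals $-(n-2)!(b-a)$. I will abbreviate $c_i := \tfrac{n-2i+1}{2}(n-2)!$ for $i \in [n] \setminus \{1\}$ and write $B_i := 2y_{1,i} + \sum_{m \in [n] \setminus \{1,i\}}y_{1,m} + \sum_{l \in [i-1] \setminus \{1\}}y_{l,i} - \sum_{m \in [n] \setminus [i]}y_{i,m}$ for the bracket in the definition of $\mathsf{g}$, so that $\mathsf{g} = \sum_{i \in [n] \setminus \{1\}} c_i B_i$. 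The one piece of genuine arithmetic needed up front is the identity $\sum_{i=2}^{n} \tfrac{n-2i+1}{2} = -\tfrac{n-1}{2}$, obtained by summing the arithmetic progression with first term $\tfrac{n-3}{2}$, last term $\tfrac{1-n}{2}$, and $n-1$ terms; equivalently $\sum_{i \in [n] \setminus \{1\}} c_i = -\tfrac{n-1}{2}(n-2)!$.

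Next I fix a pair $1 \le a < b \le n$ and locate every occurrence of $y_{a,b}$ among the four summands of $B_i$ as $i$ runs over $[n]\setminus\{1\}$, keeping track of the index restrictions. When $a \ge 2$, the monomial $y_{a,b}$ occurs exactly twice: once inside $\sum_{l \in [i-1] \setminus \{1\}}y_{l,i}$ with $i=b$ and $l=a$, which is legitimate because $2 \le a \le b-1$, contributing $c_b$; and once inside $-\sum_{m \in [n] \setminus [i]}y_{i,m}$ with $i=a$ and $m=b$, legitimate because $a+1 \le b \le n$, contributing $-c_a$. Hence the coefficient of $y_{a,b}$ in $\mathsf{g}$ is $c_b - c_a = \tfrac{(n-2b+1)-(n-2a+1)}{2}(n-2)! = (a-b)(n-2)! = -(b-a)(n-2)!$, as claimed.

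When $a = 1$, the monomial $y_{1,b}$ occurs inside $2y_{1,i}$ with $i=b$, contributing $2c_b$, and inside $\sum_{m \in [n] \setminus \{1,i\}}y_{1,m}$ with $m=b$ and $i$ ranging over $[n] \setminus \{1,b\}$, contributing $\sum_{i \in [n] \setminus \{1,b\}} c_i = -\tfrac{n-1}{2}(n-2)! - c_b$ by the identity above; it occurs in neither of the remaining two summands, since those never produce a left index equal to $1$. Adding these, the coefficient of $y_{1,b}$ is $2c_b + \bigl(-\tfrac{n-1}{2}(n-2)! - c_b\bigr) = c_b - \tfrac{n-1}{2}(n-2)! = \tfrac{(n-2b+1)-(n-1)}{2}(n-2)! = (1-b)(n-2)! = -(b-1)(n-2)!$, which is $-(b-a)(n-2)!$ since $a=1$.

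There is no real obstacle: the claim is a bilinear identity in the $y_{i,j}$, and its verification reduces to the two short coefficient computations above together with one arithmetic-progression sum. The only point requiring care is the bookkeeping — not double-counting the summands of $B_i$, and respecting the conditions $l \in [i-1]\setminus\{1\}$, $m \in [n]\setminus[i]$, and $m \in [n]\setminus\{1,i\}$ when deciding which values of $i$ (and which internal index) can produce a given $y_{a,b}$.
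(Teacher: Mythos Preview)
Your proof is correct and follows essentially the same approach as the paper's own proof: both compute the coefficient of a generic monomial $y_{a,b}$ in $\mathsf{g}$ by tracking which summands of each $B_i$ contribute, splitting into the cases $a=1$ and $a\ge 2$. Your use of separate symbols $a,b$ for the monomial indices (versus the outer summation index $i$) makes the bookkeeping slightly cleaner than in the paper.
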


\begin{proof}
For $i \in [n] \setminus \{1\}$, we have
\begin{align*}
[y_{1,i}]\mathsf{g} & = \Big(n-2i+1 + \sum_{j \in [n] \setminus \{1,i\}} \frac{n-2j+1}{2}\Big)(n-2)!  \\
& = \Big(1-i + \sum_{j \in [n]} \frac{n-2j+1}{2}\Big)(n-2)!  \\
& = -(n-2)!(i-1).
\end{align*}
For $i,j \in [n] \setminus \{1\}$ with $i<j$, we have
$$[y_{i,j}]\mathsf{g} = \frac{n-2i+1}{2}(n-2)! - \frac{n-2j+1}{2}(n-2)! = -(n-2)!(j-i).$$
\end{proof}

\begin{proposition}  \label{Prinv}
For $n \geq 4$, $\mathscr{X}_{S^{(n-1,1)}}\big(\mathsf{i}_y(n) \big)$ is diagonalizable, and
$$\mathrm{Sp}\Big(\mathscr{X}_{S^{(n-1,1)}}\big(\mathsf{i}_y(n)\big)\Big) = \big\{-(n-2)! \sum_{\substack{(i,j) \in [n]^2 \\ i<j}} (j-i)y_{i,j},\, 0\big\}$$ 
\begin{itemize}
	\item with $\displaystyle \mathrm{m}_{\mathscr{X}_{S^{(n-1,1)}}\big(\mathsf{i}_y(n)\big)}\big(-(n-2)! \sum_{\substack{(i,j) \in [n]^2 \\ i<j}} (j-i)y_{i,j}\big) = 1$,
	\item and $\mathrm{m}_{\mathscr{X}_{S^{(n-1,1)}}\big(\mathsf{i}_y(n)\big)}(0) = n-2$.
\end{itemize} 
\end{proposition}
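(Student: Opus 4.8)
The plan is to make the matrix $\mathscr{X}_{S^{(n-1,1)}}\big(\mathsf{i}_y(n)\big)$ completely explicit in the basis $\{\mathbf{1}-\mathbf{i}\}_{i\in[n]\setminus\{1\}}$, recognise it as a rank-one matrix, and then invoke Lemma~\ref{LeX} and Lemma~\ref{Leg}. First, by the same computation as in the proof of Proposition~\ref{Prfix2} (rewriting $\mathbf{a}-\mathbf{b}=(\mathbf{1}-\mathbf{b})-(\mathbf{1}-\mathbf{a})$), for $i,j\in[n]\setminus\{1\}$ the entry $[\mathbf{1}-\mathbf{j}]\,\mathsf{i}_y(n)(\mathbf{1}-\mathbf{i})$ equals the alternating sum
$$\sum_{\substack{\sigma(1)=1\\ \sigma(i)=j}}\mathrm{inv}_y(\sigma) \;-\; \sum_{\substack{\sigma(1)=j\\ \sigma(i)=1}}\mathrm{inv}_y(\sigma) \;+\; \sum_{\substack{\sigma(1)\neq1\\ \sigma(i)=j}}\mathrm{inv}_y(\sigma) \;-\; \sum_{\substack{\sigma(1)=j\\ \sigma(i)\neq1}}\mathrm{inv}_y(\sigma),$$
whose four terms are precisely the quantities evaluated in Lemma~\ref{LeYij} (the formulas there being valid for $i=j$ as well).

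Next I would substitute those four expressions and collect the coefficient of each variable $y_{l,m}$. The terms indexed by pairs inside $[n]\setminus\{1,i\}$ cancel, and the surviving terms reassemble into the single product
$$[\mathbf{1}-\mathbf{j}]\,\mathsf{i}_y(n)(\mathbf{1}-\mathbf{i}) = \frac{n-2j+1}{2}(n-2)!\,\Big(2y_{1,i} + \sum_{m\in[n]\setminus\{1,i\}}y_{1,m} + \sum_{l\in[i-1]\setminus\{1\}}y_{l,i} - \sum_{m\in[n]\setminus[i]}y_{i,m}\Big).$$
That is, $\mathscr{X}_{S^{(n-1,1)}}\big(\mathsf{i}_y(n)\big) = (\lambda_j\mathsf{x}_i)_{i,j\in[n]\setminus\{1\}}$, where $\lambda_j=\frac{n-2j+1}{2}(n-2)!$ and $\mathsf{x}_i$ is the parenthesised factor just written, which is exactly the $i$-th summand (up to the scalar $\lambda_i$) of $\mathsf{g}$ in Lemma~\ref{Leg}. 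This collapse of the four polynomial identities of Lemma~\ref{LeYij} into one rank-one product is where the bookkeeping lives, and I expect it to be the only genuine obstacle, the hard enumeration having already been carried out in Lemma~\ref{LeYij}.

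Since each $\mathsf{x}_i$ is nonzero (the coefficient of $y_{1,i}$ in it equals $2$), Lemma~\ref{LeX}, applied with $n$ replaced by $n-1$, gives $\mathrm{Sp}\big(\mathscr{X}_{S^{(n-1,1)}}(\mathsf{i}_y(n))\big)=\big\{\sum_{i\in[n]\setminus\{1\}}\lambda_i\mathsf{x}_i,\,0\big\}$ with multiplicities $1$ and $n-2$, and characteristic polynomial $t^{\,n-2}\big(t-\sum_i\lambda_i\mathsf{x}_i\big)$. By Lemma~\ref{Leg}, $\sum_{i\in[n]\setminus\{1\}}\lambda_i\mathsf{x}_i=\mathsf{g}=-(n-2)!\sum_{\substack{(i,j)\in[n]^2\\ i<j}}(j-i)y_{i,j}$, the asserted nonzero eigenvalue. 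Finally, diagonalizability follows because this eigenvalue differs from $0$: the $0$-eigenspace already has dimension $n-2$ by the vectors $\langle\mathsf{x}_i\mathbf{e}_1-\mathsf{x}_1\mathbf{e}_i\rangle$ from the proof of Lemma~\ref{LeX}, while the nonzero vector $(\lambda_i)_{i\in[n]\setminus\{1\}}$ is an eigenvector for $\mathsf{g}$, so there are $n-1$ independent eigenvectors.
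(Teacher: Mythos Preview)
Your proposal is correct and follows essentially the same route as the paper: compute the matrix entries via Lemma~\ref{LeYij}, recognise the rank-one shape $(\lambda_j\mathsf{x}_i)$, apply Lemma~\ref{LeX}, and identify the nonzero eigenvalue using Lemma~\ref{Leg}. The only substantive difference is the diagonalizability step: the paper deduces it from the diagonalizability of the full regular representation of $\mathsf{i}_y(n)$ \cite[Theorem~1.4]{Ra1}, whereas you exhibit a full set of $n-1$ independent eigenvectors directly (the $n-2$ kernel vectors from Lemma~\ref{LeX} together with $(\lambda_i)_i$), which makes your argument self-contained.
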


\begin{proof}
Using Lemma~\ref{LeYij} and Lemma~\ref{Leg}, we obtain
\begin{align*}
[\mathbf{1}-\mathbf{j}]\mathsf{i}_z(n)(\mathbf{1}-\mathbf{i}) & = \sum_{\substack{\sigma \in \mathfrak{S}_n \\ \sigma(1)=1,\, \sigma(i)=j}} \mathrm{inv}_y(\sigma) - \sum_{\substack{\sigma \in \mathfrak{S}_n \\ \sigma(1)=j,\, \sigma(i)=1}} \mathrm{inv}_y(\sigma) + \sum_{\substack{\sigma \in \mathfrak{S}_n \\ \sigma(1) \neq 1,\, \sigma(i)=j}} \mathrm{inv}_y(\sigma) - \sum_{\substack{\sigma \in \mathfrak{S}_n \\ \sigma(1)=j,\, \sigma(i) \neq 1}} \mathrm{inv}_y(\sigma)  \\
& = \frac{n-2j+1}{2}(n-2)! \Big(2y_{1,i} + \sum_{m \in [n] \setminus \{1,i\}}y_{1,m} + \sum_{l \in [i-1] \setminus \{1\}}y_{l,i} - \sum_{m \in [n] \setminus [i]}y_{i,m}\Big)  \\
& = -(n-2)! \sum_{\substack{(i,j) \in [n]^2 \\ i<j}} (j-i)y_{i,j}.
\end{align*}
Observe that the form of $\mathscr{X}_{S^{(n-1,1)}}\big(\mathsf{i}_y(n) \big)$ is similar to that in Lemma~\ref{LeX}. Thus, $\mathrm{Sp}\Big(\mathscr{X}_{S^{(n-1,1)}}\big(\mathsf{i}_y(n)\big)\Big)$ is composed of $\displaystyle -(n-2)! \sum_{\substack{(i,j) \in [n]^2 \\ i<j}} (j-i)y_{i,j}$ and $0$ whose multiplicities are respectively $1$ and $n-2$. Finally, since the regular representation of $\mathsf{i}_y(n)$ is diagonalizable \cite[Theorem~1.4]{Ra1}, then $\mathscr{X}_{S^{(n-1,1)}}\big(\mathsf{i}_y(n) \big)$ is diagonalizable.
\end{proof}

\noindent We can now proceed to the proof of Theorem~\ref{ThSp}.

\begin{proof}
From Proposition~\ref{Prfix2} and Proposition~\ref{Prinv}, we deduce that
\begin{align*}
\mathrm{diag}\,\mathscr{X}_{S^{(n-1,1)}}\big(\mathsf{i}_y(n) + \mathsf{f}_z(n)\big) & = \mathrm{diag}\,\mathscr{X}_{S^{(n-1,1)}}\big(\mathsf{i}_y(n)\big) + \mathrm{diag}\,\mathscr{X}_{S^{(n-1,1)}}\big(\mathsf{f}_z(n)\big)  \\
& = -(n-2)! \sum_{\substack{(i,j) \in [n]^2 \\ i<j}} (j-i)y_{i,j} \mathsf{I}_1 \oplus 0 \mathsf{I}_{n-2} \, + \, n(n-2)!z\mathsf{I}_{n-1} \\
& = (n-2)! \big(nz - \sum_{\substack{(i,j) \in [n]^2 \\ i<j}} (j-i)y_{i,j}\big)\mathsf{I}_1 \, + \, n(n-2)!z\mathsf{I}_{n-2}. 
\end{align*}
\end{proof}

\section{Proof of Theorem~\ref{ThDIF}}  \label{SeTh}

\noindent Let $\mathsf{I}(n) := \big(\mathrm{inv}_y(\sigma \tau^{-1})\big)_{\sigma, \tau \in \mathfrak{S}_n}.$ From Theorem~\ref{ThF} and \cite[Theorem~1.4]{Ra1}, we know that $\mathsf{IF}(n)$ is diagonalizable, and $$\mathrm{diag}\,\mathsf{IF}(n) = \mathrm{diag}\,\mathsf{F}(n) + \mathrm{diag}\,\mathsf{I}(n).$$

\noindent Using the theorem of Perron-Frobenius for multinomial \cite[Proposition~2.1]{Ra1}, we get $$\frac{n!}{2}\sum_{\substack{(i,j) \in [n]^2 \\ i<j}} y_{i,j} + n!z \in \mathrm{Sp}\big(\mathsf{IF}(n)\big) \quad \text{with} \quad \mathrm{m}_{\mathsf{IF}(n)}\Big(\frac{n!}{2} \sum_{\substack{(i,j) \in [n]^2 \\ i<j}} y_{i,j} + n!z\Big) = 1.$$

\noindent The theorem of Maschke allows to state that 
$$\mathscr{X}_{\mathbb{R}[y_{1,2}, \dots, y_{n,n-1},z][\mathfrak{S}_n]}\big(\mathsf{i}_y(n) + \mathsf{f}_z(n)\big) = \bigoplus_{\lambda \in \mathrm{Par}(n)}\overbrace{\mathscr{X}_{S^{\lambda}}\big(\mathsf{i}_y(n) + \mathsf{f}_z(n)\big) \oplus \dots \oplus \mathscr{X}_{S^{\lambda}}\big(\mathsf{i}_y(n) + \mathsf{f}_z(n)\big)}^{d_{\lambda}}.$$
Combining that with Theorem~\ref{ThSp}, and comparing with $\mathrm{m}_{\mathsf{F}(n)}\big(n(n-2)!z\big) = (n-1)^2$ and $\mathrm{m}_{\mathsf{I}(n)}\Big(-(n-2)!\sum_{\substack{(i,j) \in [n]^2 \\ i<j}} (j-i)y_{i,j} \Big) = n-1$, we deduce that
\begin{align*}
& (n-2)! \big(nz - \sum_{\substack{(i,j) \in [n]^2 \\ i<j}} (j-i)y_{i,j}\big),\, n(n-2)!z \in \mathrm{Sp}\big(\mathsf{IF}(n)\big) \ \text{with} \\
& \mathrm{m}_{\mathsf{IF}(n)}\Big((n-2)! \big(nz - \sum_{\substack{(i,j) \in [n]^2 \\ i<j}} (j-i)y_{i,j}\big)\Big) = n-1\ \text{and}\ \mathrm{m}_{\mathsf{IF}(n)}\big(n(n-2)!z\big) = (n-2)(n-1). 
\end{align*}
We deduce from Theorem~\ref{ThF} and \cite[Theorem~1.4]{Ra1} that $-(n-3)! \sum_{\substack{(i,j) \in [n]^2 \\ i<j}} \big(n -2(j-i)\big) y_{i,j}$ and $0$ are finally the last eigenvalues of $\mathsf{IF}(n)$, and their multiplicities are
$$\mathrm{m}_{\mathsf{IF}(n)}\Big(-(n-3)! \sum_{\substack{(i,j) \in [n]^2 \\ i<j}} \big(n -2(j-i)\big) y_{i,j}\Big) = \binom{n-1}{2}\ \text{and}\ \mathrm{m}_{\mathsf{IF}(n)}(0) = n! -\frac{n}{2}(3n-7) -3.$$
\begin{flushright} $\blacksquare$ \end{flushright}

\bibliographystyle{abbrvnat}

\end{document}